\setlist[itemize]{noitemsep, nolistsep}
\setlist[itemize,1]{label=\textbullet}
\setlist[itemize,2]{label=\(\circ\)}
\def\namedlabel#1#2{\begingroup
    #2%
    \def\@currentlabel{#2}%
    \phantomsection\label{#1}\endgroup
}
\theoremstyle{definition}
\newtheorem{thm}{T}
\numberwithin{thm}{section}
\numberwithin{equation}{section}
\newtheorem{definition}[thm]{Definition}
\newtheorem{theorem}[thm]{Theorem}
\newtheorem{lemma}[thm]{Lemma}
\newtheorem{corollary}[thm]{Corollary}
\newtheorem{proposition}[thm]{Proposition}
\newtheorem{remark}[thm]{Remark}
\newtheorem{example}[thm]{Example}
\newcommand{\define}[1]{\emph{#1}}
\newcommand{\emphasis}[1]{\textit{#1}}
\newcommand{\setof}[2]{\{ #1 \mid #2 \}}
\newcommand{\comp}[1]{#1^c}
\newcommand{\Z}{\mathbb{Z}}
\newcommand{\N}{\mathbb{N}}
\renewcommand{\r}{\boldsymbol{\mathrm{r}}}
\newcommand{\s}{\boldsymbol{\mathrm{s}}}
\renewcommand{\d}{\boldsymbol{\mathrm{d}}}
\newcommand{\morphdom}[1]{\mathrm{dom}(#1)}
\newcommand{\principal}[1]{F(#1)}
\newcommand{\PSfiltershiftoff}[2]{#2 \cdot #1}
\newcommand{\PSfiltershifton}[2]{#1 #2}
\newcommand{\ofFMY}{\mathrm{FMY}}
\newcommand{\ofYeend}{\mathrm{Yee}}
\newcommand{\ofSpielberg}{\mathrm{Spi}}
\newcommand{\ofOP}{\mathrm{OP}}
\newcommand{\M}{\mathrm{m}}
\newcommand{\F}{\mathrm{f}}
\newcommand{\PS}[2]{X_{#2, #1}}
\newcommand{\PG}[2]{{G_{#2, #1}}}
\newcommand{\BPS}[2]{\partial{#2}_{#1}}
\newcommand{\BPG}[2]{\partial\PG{#1}{#2}}
\newcommand{\tightgpd}{G_\mathrm{tight}}
\newcommand{\rpa}[2]{#1 \cdot #2}
\newcommand{\rpadom}[2]{\textrm{dom}_{#1}(#2)}
\newcommand{\rpacod}[2]{\textrm{im}_{#1}(#2)}
\newcommand{\rpamap}[2]{{T_{#1}^{#2}}}
\newcommand{\sdpg}[1]{G_{#1}}
\newcommand{\sdpgcylinder}[5]{Z_{#1}(#2,#3,#4,#5)}
\newcommand{\reduction}[2]{#1|_{#2}}
\newcommand{\composablepairs}[1]{#1^{(2)}}
\newcommand{\unitspace}[1]{#1^{(0)}}
\newcommand{\cylindersymbol}{Z}
\newcommand{\Fell}[3]{\cylindersymbol_{#1}\left(#2 \setminus #3\right)}
\newcommand{\Fellin}[2]{\cylindersymbol_{#1}\left(#2\right)}
\newcommand{\Fellgpd}[5]{
    \cylindersymbol_{#1}\left(#2 \setminus #3, #4 \setminus #5\right)
}
\begin{document}

\title[
    Groupoids of finitely aligned higher-rank graphs
]{
    Groupoids of finitely aligned higher-rank graphs \\
    via filters and graph morphisms
}

\author[L. O. Clark]{Lisa Orloff Clark}
\address[L. O. Clark]{
    School of Mathematics and Statistics,
    Victoria University of Wellington,
    PO Box 600,
    Wellington 6140,
    New Zealand
}
\email{\href{mailto:lisa.orloffclark@vuw.ac.nz}{lisa.orloffclark@vuw.ac.nz}}

\author[M. Jones]{Malcolm Jones}
\address[M. Jones]{
    Institute of Mathematics, Physics and Mechanics,
    Ljubljana 1000,
    Slovenia
}
\email{\href{mailto:malcolm.jones@imfm.si}{malcolm.jones@imfm.si}}

\begin{spacing}{1}
    \begin{abstract}
        Path and boundary-path groupoids of finitely aligned higher-rank 
        graphs are often constructed using either filters or graph morphisms. 
        We generalise the graph morphism approach to finitely aligned 
        $P$-graphs where $(Q, P)$ is a weakly quasi-lattice ordered group, 
        and we show the filter approach and the graph morphism approach yield 
        isomorphic path and boundary-path groupoids. 
        To do this, we define conjugacy of partial semigroup actions such 
        that conjugate actions have isomorphic semidirect product groupoids. 
        Combining our results with others in the literature, we survey many 
        isomorphic presentations of path and boundary-path groupoids at 
        different levels of generality.
    \end{abstract}
\end{spacing}
    
\maketitle

\section{Introduction}

This paper grew out of a question raised by Exel about the various 
constructions of groupoids and inverse semigroups associated to the 
higher-rank graphs of \cite{KP00}. 
On \cite[Page 195]{Exe08} Exel says:
\begin{quote}
    Although we have not invested all of the necessary energy to study the 
    inverse semigroup constructed from a general higher rank graph, as in 
    \cite{FMY05}, we conjecture that the groupoid there is the same as the 
    groupoid $\tightgpd$...
\end{quote}

In this paper, we show various constructions do indeed yield isomorphic 
groupoids. 
Given a finitely aligned higher-rank graph $\Lambda$, we combine our main 
result with a result from \cite{OP20} to conclude that Exel's tight groupoid 
$\tightgpd$ of the inverse hull of $\Lambda$ is isomorphic to the 
boundary-path groupoid of \cite{FMY05} (see 
\cref{rem:gpd_isoms_for_kgraphs}). 
Further analysis of the relationship between the inverse hull and the inverse 
semigroup of \cite{FMY05} is needed to determine whether $\tightgpd$ of the 
inverse semigroup of \cite{FMY05} is isomorphic too, which would be the most 
complete answer to Exel's conjecture. 
In this sense, we give a partial answer to Exel's conjecture in this paper. 

There are two main approaches to defining the path space of a higher-rank 
graph $\Lambda$. 
The first, initiated in \cite{KP00} for the row-finite case and generalised 
in \cite{Yee07} to the finitely aligned case, is to use certain graph 
morphisms from path prototypes to $\Lambda$. 
The second is to use filters of $\Lambda$, which stems from various work 
\cite{Nic92,Exe08,Spi12} and was first applied to higher-rank graphs in 
\cite{BSV13}. 
Similar to \cite{ACaHJL}, where the groupoid of filters and the groupoid of 
germs of an inverse semigroup are shown to be isomorphic, the main result in 
this paper is that the filter and graph morphism approaches yield isomorphic 
groupoids (\cref{cor:isom}), which we use to answer Exel's conjecture. 
This is folklore, and evidence can be found in \cite[Remarks 2.2]{KP00}, 
\cite[Page 2763]{PW05} and \cite[Proposition 19.11]{Exe08}, but the complete 
details have not appeared.
Moreover, we extend this framework by working more generally with the 
finitely aligned $P$-graphs of \cite{BSV13} for weakly quasi-lattice ordered 
groups $(Q, P)$ in the sense of \cite{aHNSY21}. 
Higher-rank graphs, also called $k$-graphs, correspond to the special case 
where $(Q, P) =(\Z^k, \mathbb{N}^k)$. 
The generalisation of $k$-graphs considered in \cite{CKSS14}, where $\N^k$ is 
replaced with a finitely-generated cancellative abelian monoid $P$, is 
outside the scope of this paper.

Our preferred approach is to define the path space of a $P$-graph using 
filters, as in \cite{arXiv_Jon25}. 
Even for $k$-graphs, the appeal of this approach becomes clear when the graph 
is not row-finite. 
In that setting, the graph morphisms approach becomes a lot more complicated. 
See for example \cite[Example~5.4]{CP17_Cohn} for the path groupoid of a 
row-finite $k$-graph with no sources and \cite[Example~5.2]{CP17_KP} for the 
boundary-path groupoid of a finitely aligned $k$-graph.
In what follows, we introduce a graph morphism approach to define the path 
and boundary-path spaces for $P$-graphs that is new (and messy). 
Then we show it is isomorphic to the cleaner filter version. 

To prove our main result that the path groupoids via filters are isomorphic 
to the path groupoids via graph morphisms for finitely aligned $P$-graphs, 
where $(Q, P)$ is a weakly quasi-lattice ordered group, we extend the 
definition of \emph{conjugacy} for Deaconu--Renault systems from 
\cite{ABCE23} to the partial semigroup actions described in \cite{RW17}. 
We prove that if two semigroup actions are conjugate, then they have 
topologically isomorphic semidirect product groupoids.
Various notions of conjugacy of special $k$-graphs have been studied in 
\cite{CR2021}. 
It would be interesting to interpret these ideas in the framework of the 
partial semigroup actions and semidirect product groupoids of \cite{RW17}, 
but this is not considered in the present paper.

As in the directed graph setting, each finitely aligned $k$-graph has both a 
Toeplitz and Cuntz--Krieger algebra \cite[Remark 3.9]{FMY05}. 
The C*-algebras associated to the path and boundary-path groupoid according 
to \cite{Ren80} coincide with the Toeplitz and Cuntz--Krieger algebras, 
respectively. 
The Cuntz--Krieger algebra is often simply called the `graph C*-algebra' and 
the boundary-path groupoid is called the `graph groupoid' or even the `path 
groupoid'. 
Presumably in order to avoid confusion, \cite{RW17} calls the path groupoid 
the `Toeplitz groupoid' \cite[Definition 6.11]{RW17} and the boundary-path 
groupoid the `Cuntz--Krieger groupoid' \cite[Definition 6.18]{RW17} in the 
setting of $(\r, \d)$-proper topological $P$-graphs. 
We consider both the path (i.e. Toeplitz) and boundary-path (i.e. 
Cuntz--Krieger) groupoids in this paper.

The \textit{topological} higher-rank graphs of \cite{Yee07, RW17, RSWY18} are 
beyond the scope of the present paper. 
That is, we consider only finitely aligned \textit{discrete} $P$-graphs. 
In order to show our path and boundary-path groupoids of finitely aligned 
(discrete) $P$-graphs generalise those of $k$-graphs, we looked for a 
description of both groupoids in the discrete setting in the literature. 
However, descriptions of the groupoids in the discrete setting are 
restrictive in one way or another:
\begin{itemize}
    \item \cite[\S 3]{Web11} describes the path \emphasis{space} of finitely 
    aligned $k$-graphs but not the \emphasis{groupoid};
    \item \cite[Appendix B.1]{aHKR15} describes the path groupoid of 
    \textit{row-finite} $k$-graphs;
    \item \cite[Example 5.4]{CP17_Cohn} describes the path groupoid of 
    \textit{row-finite} $k$-graphs with \textit{no sources}; and
    \item \cite[Example 5.2]{CP17_KP} describes only the 
    \textit{boundary}-path groupoid of finitely aligned $k$-graphs.
\end{itemize}
We believe it is necessary to use \cite{Yee07} as our reference for path and 
boundary-path groupoids of finitely aligned discrete $k$-graphs.

After a preliminaries section, in \cref{section:conjugacy} we introduce 
conjugacy for partial semigroup actions and show in 
\cref{thm:conjugate_actions_have_isomorphic_groupoids} that conjugate actions 
yield isomorphic partial action groupoids. 
We then describe the filter and graph morphism approaches to path and 
boundary-path groupoids of finitely aligned $P$-graphs in 
\cref{section:graphs} and show in 
\cref{sec:conjugacy_of_filters_and_morphisms} that the corresponding partial 
semigroup actions are conjugate, see \cref{thm:conjugacy}. 
Consequently, the path and boundary-path groupoids via filters and graph 
morphisms are isomorphic, see \cref{cor:isom}. 
For finitely aligned $P$-graphs, we reconcile with Spielberg's groupoids from 
\cite{Spi20} and remark that the boundary-path groupoid has an inverse 
semigroup model due to a result of \cite{OP20}, see 
\cref{rem:gpd_isoms_for_Pgraphs}. 
For row-finite (discrete) $P$-graphs, we reconcile also with the Toeplitz and 
Cuntz--Krieger \textit{groupoids} of \cite{RW17}, see 
\cref{rem:gpd_isoms_for_Pgraphs_row_finite}. 
In \cref{sec:k}, we show that the path and boundary-path groupoids coincide 
with those of \cite{FMY05, Yee07} for finitely aligned (discrete) $k$-graphs, 
see \cref{rem:gpd_isoms_for_kgraphs}. 
We conclude the paper with \cref{rem:Exel}, where we respond to Exel's 
conjecture. 
It is the nature of this paper that we construct/recall many groupoids from 
the literature. 
Their notation is inconsistent in the literature, so we summarise the 
convention we have chosen for this paper in \cref{sec:notation}.

\section{Preliminaries}
    \label{sec:prelims}

\subsection{Semigroup actions}
    \label{sec:semigroup_action_prelims}

The following definitions come from \cite[\S 5]{RW17}.
Let $X$ be a set, and let $P$ be a semigroup with identity $e$.
Suppose $X * P$ is a subset of $X \times P$ and $T_X \colon X * P \to X$ is a 
function sending each $(x,m) \in X * P$ to $T_X((x, m)) \eqqcolon \rpa{x}{m} 
\in X$ and satisfying:
\begin{enumerate}
    \item[\namedlabel{S1}{(S1)}] for all $x \in X$, $(x,e) \in X * P$ and 
    $\rpa{x}{e} = x$; and
    \item[\namedlabel{S2}{(S2)}] for all $(x,m,n) \in X \times P \times P$, 
    $(x,mn) \in X * P$ if and only if $(x,m) \in X * P$ and $(\rpa{x}{m},n) 
    \in X * P$, in which case $\rpa{(\rpa{x}{m})}{n} = \rpa{x}{(mn)}$.
\end{enumerate}
We call the triple $(X, P, T_X)$ a \define{semigroup action (of $P$ on $X$)}.
\label{def:semigroup_action}
For each $m \in P$, we write
\[
    \rpadom{X}{m} \coloneqq \setof{x \in X}{(x, m) \in X * P}
    \text{ and }
    \rpacod{X}{m} \coloneqq \setof{\rpa{x}{m}}{x \in \rpadom{X}{m}},
\]
and we define $\rpamap{X}{m} \colon \rpadom{X}{m} \to \rpacod{X}{m}$ by 
$\rpamap{X}{m}(x) = \rpa{x}{m}$, for each $x \in \rpadom{X}{m}$. 
Whenever we write $\rpa{x}{m}$, it is to be understood that $x \in 
\rpadom{X}{m}$.
The semigroups $P$ that we consider come with a partial order $\leq$ (see 
\cref{sec:WQLO}). 
In this case, we say $(X, P, T_X)$ is \define{directed} if, for all $m,n \in 
P$, $\rpadom{X}{m} \cap \rpadom{X}{n} \neq \emptyset$ implies there is $l \in 
P$ such that $m,n \leq l$ and $\rpadom{X}{m} \cap \rpadom{X}{n} \subseteq 
\rpadom{X}{l}$, in which case $\rpadom{X}{m} \cap \rpadom{X}{n} = 
\rpadom{X}{l}$ as per the note following \cite[Definition 5.2]{RW17}.
Recall that a \define{local homeomorphism} from a space $X$ to a space $Y$ is 
a continuous map $f \colon X \to Y$ such that every $x \in X$ has an open 
neighbourhood $U$ such that $f(U)$ is open in $Y$ and the restriction $f|_U 
\colon U \to f(U)$ of $f$ is a homeomorphism \cite[Page 68]{Wil70}.
A semigroup action $(X, P, T)$, where $P$ is a subsemigroup of a discrete 
group $Q$, is \define{locally compact} if $X$ is a locally compact Hausdorff 
space and, for all $m \in P$, $\rpadom{X}{m}$ and $\rpacod{X}{m}$ are open in 
$X$ and $\rpamap{X}{m} \colon \rpadom{X}{m} \to \rpacod{X}{m}$ is a local 
homeomorphism.
In order to characterise invariant subsets of the unit space of the 
semidirect product groupoids associated to semigroup actions, we make the 
following definition.

\begin{definition}
    \label{def:invariant}
    Given a semigroup action $(X, P, T_X)$, we say a subset $U \subseteq X$ 
    is \define{invariant (with respect to $T_X$)} if, for all $x \in X$, $y 
    \in U$ and $m, n \in P$, $\rpa{x}{m} = \rpa{y}{n}$ implies $x \in U$.
\end{definition}

\subsection{Groupoids}
    \label{sec:groupoids}
    
We refer the reader to \cite{Pat99,Wil19,SSW20} for the following 
preliminaries on groupoids.
A \define{groupoid} consists of a set $G$ and a subset $\composablepairs{G} 
\subseteq G \times G$ together with a map $(g,h) \mapsto gh$ from 
$\composablepairs{G}$ to $G$ (called \define{composition}) and a map $g 
\mapsto g^{-1}$ from $G$ to $G$ (called \define{inversion}) such that:
\begin{enumerate}
    \item if $(f, g),(g, h) \in \composablepairs{G}$, then $(f, gh),(fg, h) 
    \in \composablepairs{G}$ and $f(gh) = (fg)h \eqqcolon fgh$;
    \item for all $g \in G$, $(g^{-1})^{-1} = g$; and
    \item for all $g \in G$, we have $(g^{-1},g) \in \composablepairs{G}$, 
    and if $(g,h) \in \composablepairs{G}$, then $g^{-1}(gh) = h$ and 
    $(gh)h^{-1} = g$.
\end{enumerate}
The set $\unitspace{G} \coloneqq \setof{x \in G}{x = xx = x^{-1}}$ is called 
the \define{unit space} of $G$.
There are \define{range} and \define{source} maps from $G$ to $\unitspace{G}$ 
given by $\r(g) \coloneqq gg^{-1}$ and $\s(g) \coloneqq g^{-1}g \in 
\unitspace{G}$, for each $g \in G$, respectively.
We have $\r(G) = \s(G) = \unitspace{G}$, and $(g, h) \in \composablepairs{G}$
if and only if $\s(g) = \r(h)$.

Let $G$ be a groupoid endowed with a (\emphasis{not necessarily Hausdorff}) 
topology.
The set $\composablepairs{G}$ is endowed with the subspace topology from the 
product topology on $G \times G$.
We say $G$ is \define{topological} if composition and inversion are 
continuous.
A groupoid $G$ is \define{\'etale} if $\s \colon G \to \unitspace{G}$ is a 
local homeomorphism. 
By \cite[Theorem 5.18]{Res07}, a topological groupoid $G$ is \'etale if and 
only if $\unitspace{G}$ is open in $G$ and $\s$ is an open map.
Given an \'etale locally compact (not necessarily Hausdorff) groupoid $G$ 
such that $\unitspace{G}$ is Hausdorff, we say $G$ is \define{ample} if 
$\unitspace{G}$ has a basis of compact open sets (which is equivalent to the 
definition via bisections by \cite[Proposition 3.6]{Ste10} or 
\cite[Proposition~4.1]{Exe2010}).

If $G$ is a groupoid, we call $H \subseteq G$ a \define{subgroupoid} if $H$ 
is a groupoid under the inherited operations.
Given $U \subseteq \unitspace{G}$, the \define{reduction} of $G$ to $U$ is 
$\reduction{G}{U} \coloneqq \s^{-1}(U) \cap \r^{-1}(U)$, which is a 
subgroupoid of $G$.
We say $U \subseteq \unitspace{G}$ is \define{invariant} if $\r(\s^{-1}(U)) 
\subseteq U$, in which case $\reduction{G}{U} = \s^{-1}(U)$.
Given an \'etale locally compact groupoid $G$ with Hausdorff unit space 
$\unitspace{G}$, if $U \subseteq \unitspace{G}$ is closed and invariant, then 
$\reduction{G}{U}$ is an \'etale locally compact closed subgroupoid of $G$ 
(see \cite[Page 96]{SSW20}).
Given topological groupoids $G$ and $H$, a \define{(topological) isomorphism} 
from $G$ to $H$ is a homeomorphism $\psi \colon G \to H$ such that 
$(g_1, g_2) \in \composablepairs{G}$ if and only if $(\psi(g_1), \psi(g_2)) 
\in \composablepairs{H}$, in which case $\psi(g_1g_2) = \psi(g_1)\psi(g_2)$, 
and $\psi(g^{-1}) = \psi(g)^{-1}$ for all $g \in G$. 
We say $G$ and $H$ are \define{(topologically) isomorphic} if there is a 
(topological) isomorphism from $G$ to $H$.

\subsection{Semidirect product groupoids}
    \label{sec:semidirect_product_groupoid_prelims}

Semidirect product groupoids are a generalisation of Deaconu--Renault 
groupoids \cite{Dea95, Ren00}. 
Deaconu--Renault groupoids are used in \cite[\S 2.2]{ABCE23}.
Let $(X, P, T_X)$ be a directed and locally compact semigroup action, where 
$P$ is a subsemigroup of a discrete group $Q$.
In \cite{RW17}, Renault and Williams associate a semidirect product groupoid 
$\sdpg{X}$ to $(X, P, T_X)$ as follows.
Let $\sdpg{X}$ be the set of $(x,q,y) \in X \times Q \times X$ for which 
there are $m,n \in P$ satisfying $q = mn^{-1}$, $x \in \rpadom{X}{m}$, $y \in 
\rpadom{X}{n}$ and $\rpa{x}{m} = \rpa{y}{n}$. 
Let $\composablepairs{\sdpg{X}}$ be the set of pairs $((x,q,y),(w,r,z)) \in 
\sdpg{X} \times \sdpg{X}$ with $y = w$. 
Then, $\sdpg{X}$ is a groupoid with composition 
\(
    (x,q,y)(y,r,z) \coloneqq (x,qr,z)
\)
and inversion 
\(
    (x,q,y)^{-1} \coloneqq (y,q^{-1},x),
\)
called the \define{semidirect product groupoid} of $(X, P, T_X)$.
The collection of 
\[
    \sdpgcylinder{X}{U}{m}{n}{V} \coloneqq 
    \setof{
        (x,mn^{-1},y) \in \sdpg{X}
    }{
        x \in U, y \in V \text{ and } \rpa{x}{m} = \rpa{y}{n}
    },
\]
where $m, n \in P$ and $U, V \subseteq X$ are open, is a basis for a topology 
on $\sdpg{X}$ such that $\sdpg{X}$ is an \'etale locally compact Hausdorff 
groupoid, and $x \mapsto (x, e, x)$ is a homeomorphism from $X$ to 
$\unitspace{\sdpg{X}}$.
Thus, if $X$ has a basis of compact open sets, then $\sdpg{X}$ is ample.
Also, given any basis for the topology on $X$, the collection of 
$\sdpgcylinder{X}{U}{m}{n}{V}$, where $m, n \in P$ and $U, V \subseteq $ are 
basic open sets, is a basis for the topology on $\sdpg{X}$.
The following lemma is straight-forward so we omit the proof.
It allows us to identify invariant subsets of $X$ in the sense of 
\cref{def:invariant} with invariant subsets of $\unitspace{\sdpg{X}}$ via the 
homeomorphism $x \mapsto (x, e, x)$ from $X$ to $\unitspace{\sdpg{X}}$.

\begin{lemma}
    \label{lem:inv}
    Let $(X, P, T_X)$ be a directed and locally compact semigroup action, 
    where $P$ is a subsemigroup of a discrete group $Q$.
    \begin{enumerate}
        \item \label{it1:leminv} For any $U \subseteq X$, $U$ is invariant 
        with respect to $T_X$ if and only if $U$ identifies with an invariant 
        subset of $\unitspace{\sdpg{X}}$ via $x \mapsto (x, e, x)$.
        \item \label{item2:inv} For any closed invariant $U \subseteq X$, we 
        have
        \(
            \reduction{\sdpg{X}}{U} = \setof{(x, q, y) \in \sdpg{X}}{x, y \in 
            U},
        \)
        which is an \'etale locally compact Hausdorff closed subgroupoid of 
        $\sdpg{X}$.
    \end{enumerate}
\end{lemma}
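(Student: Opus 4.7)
The plan is to unwind both notions of invariance and observe that they state the same condition, then deduce \cref{item2:inv} from \cref{it1:leminv} together with the general fact about reductions to closed invariant unit subsets cited in \cref{sec:groupoids}.

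For \cref{it1:leminv}, I would transport $U$ to $\tilde{U} \coloneqq \{(x,e,x) : x \in U\} \subseteq \unitspace{\sdpg{X}}$ via the given homeomorphism and use that, for any $(x,q,y) \in \sdpg{X}$, we have $\r((x,q,y)) = (x,e,x)$ and $\s((x,q,y)) = (y,e,y)$. The groupoid invariance condition $\r(\s^{-1}(\tilde{U})) \subseteq \tilde{U}$ then reads: for every $(x,q,y) \in \sdpg{X}$ with $y \in U$, we must have $x \in U$. Unfolding the definition of $\sdpg{X}$, such an element exists exactly when there are $m, n \in P$ with $\rpa{x}{m} = \rpa{y}{n}$, so this is literally the condition from \cref{def:invariant}; both implications follow at once.

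For \cref{item2:inv}, closed invariance of $U$ transfers to closed invariance of $\tilde{U}$ because $x \mapsto (x,e,x)$ is a homeomorphism onto $\unitspace{\sdpg{X}}$. Invariance then gives $\reduction{\sdpg{X}}{\tilde{U}} = \s^{-1}(\tilde{U})$, as recalled in \cref{sec:groupoids}, which unpacks to $\{(x,q,y) \in \sdpg{X} : y \in U\}$; applying \cref{it1:leminv} upgrades this to $\{(x,q,y) \in \sdpg{X} : x, y \in U\}$. The étale locally compact closed subgroupoid conclusion is then immediate from the general fact about reductions cited in \cref{sec:groupoids}, and Hausdorff-ness is inherited from $\sdpg{X}$.

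There is no serious obstacle: the content of the lemma is the observation that the unit-space notion of invariance from \cref{sec:groupoids} and the action-space notion from \cref{def:invariant} coincide under the identification $x \leftrightarrow (x,e,x)$, and this is pure bookkeeping. The only subtlety is keeping track of the source/range convention, which determines whether the implication runs from $y \in U$ to $x \in U$ or in the opposite direction; this is automatically symmetric since $(x,q,y) \in \sdpg{X}$ entails $(y,q^{-1},x) \in \sdpg{X}$.
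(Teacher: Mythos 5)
Your argument is correct and is precisely the routine unwinding the paper has in mind---the paper explicitly omits the proof of \cref{lem:inv} as straightforward, and your computation that $\r((x,q,y))=(x,e,x)$, $\s((x,q,y))=(y,e,y)$ turns the groupoid condition $\r(\s^{-1}(\tilde{U}))\subseteq\tilde{U}$ into the condition of \cref{def:invariant} verbatim. Part \eqref{item2:inv} then follows, as you say, from part \eqref{it1:leminv} together with the fact about reductions to closed invariant subsets already recorded in \cref{sec:groupoids}, with Hausdorffness passing to the subspace.
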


\subsection{Weakly quasi-lattice ordered groups}
    \label{sec:WQLO}
    
We follow \cite{aHNSY21} for preliminaries on weakly quasi-lattice ordered 
groups.
Let $Q$ be a discrete group, and let $P$ be a semigroup with identity $e$ 
such that $P \cap P^{-1} = \{e\}$. 
For each $p, r \in P$, define $p \leq r$ if $pq = r$ for some $q \in P$, 
which is left invariant.
We say $(Q, P)$ is a \define{weakly quasi-lattice ordered group} if, whenever 
two elements of $P$ have a common upper bound with respect to $\leq$, there 
is a least common upper bound.
Roughly speaking, the $P$-graphs introduced in \cite{BSV13} and considered in 
this paper are defined by replacing the role of $(\Z^k, \N^k)$ in the 
$k$-graphs of \cite{KP00} with a weakly quasi-lattice ordered group $(Q, P)$. 
A different generalisation of $k$-graphs is considered in \cite{CKSS14}, 
where $\N^k$ is replaced with a finitely-generated cancellative abelian 
monoid. 
We give examples of a weakly quasi-lattice ordered group that is not 
finitely-generated and a finitely-generated cancellative abelian monoid that 
is not weakly quasi-lattice ordered to emphasise that neither collection of 
generalised $k$-graphs is contained in the other. 
In this paper, we consider the $P$-graphs of \cite{BSV13}.

\begin{example}[Example 3.2 of \cite{aHNSY21} and Section 2.3 of 
                \cite{Nic92}]
    Let $\mathbb{F}$ be the free group on a countably infinite set of 
    generators, and let $\mathbb{F}^+$ be the smallest subsemigroup 
    containing the identity and the generators. 
    Then, $(\mathbb{F}, \mathbb{F}^+)$ is weakly quasi-lattice ordered (in 
    fact, it is a quasi-lattice in the sense of \cite{aHNSY21}), but 
    $\mathbb{F}^+$ is not finitely-generated.
\end{example}

\begin{example}
    Let $P$ be the smallest submonoid of the additive monoid $\N^2$ 
    containing $(1, 0)$, $(1, 1)$ and $(1, 2)$, and suppose $P$ embeds in a 
    discrete group $Q$.
    Observe $(1, 0) + (1, 1) = (2, 1)$ and $(1, 0) + (1, 2) = (1, 1) + (1, 1) 
    = (2, 2)$, so $(2, 1)$ and $(2, 2)$ are distinct minimal common upper 
    bounds of $(1, 0)$ and $(1, 1)$. 
    Thus, $(1, 0)$ and $(1, 1)$ do not have a least common upper bound, and 
    so $(Q, P)$ is not weakly quasi-lattice ordered.
\end{example}

\subsection{\texorpdfstring{$P$-graphs}{P-graphs}}
    \label{sec:Pgraphs}

Following \cite{BSV13}, given a weakly quasi-lattice ordered group $(Q, P)$, 
a (discrete) \define{$P$-graph} consists of a countable small category 
$(\Lambda, \composablepairs{\Lambda}, \unitspace{\Lambda}, \r, \s)$ together 
with a functor $\d \colon \Lambda \to P$ satisfying the \define{unique 
factorisation property}: for every $\lambda \in \Lambda$ and $p, q \in P$ 
with $\d(\lambda) = pq$, there are unique elements $\mu, \nu \in \Lambda$ 
such that $\lambda = \mu\nu$, $\d(\mu) = p$ and $\d(\nu) = q$.
For each $p \in P$, we write $\Lambda^p \coloneqq \d^{-1}(p)$, and we have 
$\unitspace{\Lambda} = \Lambda^e$.
For each $\mu \in \Lambda$ and $E \subseteq \Lambda$, we write 
\(
    \lambda E \coloneqq \setof{
        \lambda\mu
    }{
        \mu\in E \text{ and } \s(\lambda) = \r(\mu)
    }.
\)
We say $\Lambda$ is \define{finitely aligned} if, for all $\mu, \nu \in 
\Lambda$, there is a finite $J \subseteq \Lambda$ such that $\mu\Lambda 
\cap \nu\Lambda = \bigcup_{\lambda \in J} \lambda\Lambda$ as in 
\cite[Page 729]{Spi12}. 
For each $\mu, \lambda \in \Lambda$, define $\mu \preceq \lambda \iff \mu\nu 
= \lambda \text{ for some } \nu \in \Lambda \iff \lambda \in \mu\Lambda$.

We state some properties and notations for $P$-graphs.
It follows from the unique factorisation property that $\Lambda$ is 
\define{left cancellative} (i.e. $\mu\nu = \mu\kappa$ implies $\nu = 
\kappa$), \define{right-cancellative} (i.e. $\mu\nu = \kappa\nu$ implies $\mu 
= \kappa$), and has \define{no inverses} (i.e. $\mu\nu = \s(\nu)$ implies 
$\mu = \nu = \s(\nu)$). 
Hence, any $P$-graph is a \define{category of paths} (i.e. a left and right 
cancellative small category having no inverses) as in 
\cite[Example 2.2(6)]{Spi14}.
The relation $\preceq$ is the same as that of \cite[Definition 2.5]{Spi14} 
for categories of paths, so $\preceq$ is a partial order. 
Moreover, $\preceq$ is \define{left invariant} (i.e. $\mu\nu \preceq 
\mu\kappa$ implies $\nu \preceq \kappa$) because of left cancellation.
The $k$-graphs of \cite{KP00} are the $P$-graphs where $(Q, P) = 
(\mathbb{Z}^k, \N^k)$ for some $k \in \mathbb{Z}^+$.
A \define{$P$-graph morphism} is a functor $x \colon \Lambda_1 \to \Lambda_2$ 
from a $P$-graph $(\Lambda_1, \d_1)$ to a $P$-graph $(\Lambda_2, \d_2)$ 
satisfying $\d_1(\lambda) = \d_2(x(\lambda))$, for all $\lambda \in 
\Lambda_1$.
For discrete $\Lambda$, the notion of ``rank-$k$ (finite) path prototypes'' 
of \cite{Yee07} generalises to weakly quasi-lattice ordered groups $(Q, P)$ 
without modification as follows.

\begin{example}
    \label{ex:path_prototypes_finite}
    Let $(Q, P)$ be a weakly quasi-lattice ordered group. 
    For each $m \in P$, the set
    \[
        \Omega_{P, m} \coloneqq 
        \setof{
            (p, q) \in P \times P
        }{
            p \leq q \leq m
        }
    \]
    is a $P$-graph with operations
    \begin{align*}
        &\composablepairs{\Omega_{P, m}} \coloneqq \setof{
            ((p, q), (q', r)) \in \Omega_{P, m} \times \Omega_{P, m}
        }{
            q = q'
        } \to \Omega_{P, m}, \\ 
        &((p, q), (q, r)) \mapsto (p, r) \\
        &\d \colon \Omega_{P, m} \to P, \quad (p, q) \mapsto p^{-1}q.
    \end{align*}
\end{example}

In this paper, we reconcile constructions of the path space, boundary-path 
space, path groupoid and boundary-path groupoid of a $P$-graph. 
There are a number of different constructions that yield the same structures 
up to isomorphism. 
The notation of these structures is inconsistent in the literature. 
To make this paper as readable as possible, we work with a particular 
convention, which we collate in \cref{sec:notation}.

\section{Conjugacy of semigroup actions}
\label{section:conjugacy}

We generalise the definition of a conjugacy of Deaconu--Renault systems 
as in \cite{ABCE23} via a characterisation in \cite[Lemma 2.6(2)]{ABCE23}.
A number of questions can be asked about conjugacy of semigroup actions in 
analogy with \cite[Theorem 3.1]{ABCE23}. 
In this paper, we only check that conjugate semigroup actions have isomorphic 
semidirect product groupoids 
(\cref{thm:conjugate_actions_have_isomorphic_groupoids}) to provide ourselves 
with a framework to show that different constructions of groupoids from 
$P$-graphs are isomorphic (\cref{cor:isom}).
Various notions of conjugacy of finitely aligned $k$-graphs are studied in 
\cite{CR2021}. 
In particular, two finitely aligned $k$-graphs are shown to be eventually 
one-sided conjugate (see \cite[Definition 3.1]{CR2021}) if and only if there 
is a cocycle-preserving isomorphism between their associated boundary-path 
groupoids \cite[Theorem 3.2]{CR2021}. 
It would be interesting to investigate an analogue of eventual one-sided 
conjugacy for semigroup actions to characterise when two semidirect product 
groupoids admit a cocycle-preserving isomorphism.

\begin{definition}
    \label{def:conjugate}
    Let $P$ be a semigroup with identity $e$, and let $(X, P, T_X)$ and 
    $(Y, P, T_Y)$ be semigroup actions on topological spaces $X$ and $Y$.
    A \define{conjugacy} is a map $h \colon X \to Y$ such that:
    \begin{itemize}
        \item[\namedlabel{C1}{(C1)}] $h$ is a homeomorphism;
        \item[\namedlabel{C2}{(C2)}] $h(\rpadom{X}{m}) = \rpadom{Y}{m}$, for 
        all $m \in P$; and 
        \item[\namedlabel{C3}{(C3)}] $h(\rpa{x}{m}) = \rpa{h(x)}{m}$, for all 
        $m \in P$ and $x \in \rpadom{X}{m}$.
    \end{itemize}
    We say $(X, P, T_X)$ and $(Y, P, T_Y)$ are \define{conjugate} if there is 
    a conjugacy $h \colon X \to Y$.
\end{definition}

It follows from the definition of a conjugacy that $h(\rpacod{X}{m}) = 
\rpacod{Y}{m}$.

\begin{proposition}
    \label{prop:conjugacy_preserves_properties}
    Let $P$ be a semigroup with identity $e$, and let $(X, P, T_X)$ and 
    $(Y, P, T_Y)$ be conjugate semigroup actions. 
    \begin{enumerate}
        \item If $(X, P, T_X)$ is locally compact, then $(Y, P, T_Y)$ is 
        locally compact. 
        \item If $(X, P, T_X)$ is directed, then $(Y, P, T_Y)$ is directed.
    \end{enumerate}
\end{proposition}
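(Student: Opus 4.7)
The plan is to transfer each structural property from $X$ to $Y$ through the homeomorphism $h$, using the compatibility conditions \ref{C2} and \ref{C3}. Both items amount to checking the relevant clauses in the definitions of locally compact and directed semigroup actions, so the proof should essentially be a matter of chasing definitions.

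For (1), local compactness and Hausdorffness of $Y$ transfer from $X$ by \ref{C1}. For each $m \in P$, condition \ref{C2} together with the observation following \cref{def:conjugate} gives $\rpadom{Y}{m} = h(\rpadom{X}{m})$ and $\rpacod{Y}{m} = h(\rpacod{X}{m})$, which are open in $Y$ because $h^{-1}$ is continuous. To see that $\rpamap{Y}{m}$ is a local homeomorphism, I would use \ref{C3} to write $\rpamap{Y}{m} = h \circ \rpamap{X}{m} \circ h^{-1}$ on $\rpadom{Y}{m}$, exhibiting it as a composition of a homeomorphism, a local homeomorphism, and a homeomorphism.

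For (2), suppose $\rpadom{Y}{m} \cap \rpadom{Y}{n}$ is nonempty. Since $h$ is a bijection, \ref{C2} yields $\rpadom{X}{m} \cap \rpadom{X}{n} = h^{-1}(\rpadom{Y}{m} \cap \rpadom{Y}{n})$, which is therefore also nonempty. Directedness of $(X, P, T_X)$ then produces $l \in P$ with $m, n \leq l$ and $\rpadom{X}{m} \cap \rpadom{X}{n} \subseteq \rpadom{X}{l}$. Applying $h$ and invoking \ref{C2} a second time gives the required inclusion $\rpadom{Y}{m} \cap \rpadom{Y}{n} \subseteq \rpadom{Y}{l}$.

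No step presents a substantive obstacle; the proof is a routine transfer of structure. The underlying reason everything goes through is that the conjugacy relation is symmetric: properties \ref{C1}--\ref{C3} hold just as well with $h$ replaced by $h^{-1}$ and the roles of $X$ and $Y$ swapped, so any property appearing in the definitions of locally compact or directed can be pushed across $h$ in either direction without friction.
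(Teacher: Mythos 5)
Your proposal is correct and follows essentially the same route as the paper: \ref{C1} transfers the topology, \ref{C2} (plus the observation that $h(\rpacod{X}{m}) = \rpacod{Y}{m}$) handles openness of domains and images, the identity $\rpamap{Y}{m} = h \circ \rpamap{X}{m} \circ h^{-1}$ from \ref{C3} gives the local homeomorphism, and directedness is pushed across via injectivity of $h$ and \ref{C2}. No gaps.
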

\begin{proof}
    Say $h \colon X \to Y$ is a conjugacy, and suppose $(X, P, T_X)$ is 
    locally compact.
    Since $X$ is locally compact Hausdorff, \ref{C1} implies $Y$ is locally 
    compact Hausdorff.
    Fix $m \in P$.
    Since $\rpadom{X}{m}$ is open, \ref{C1} and \ref{C2} together imply 
    $\rpadom{Y}{m}$ is open.
    Also, $\rpacod{X}{m}$ is open, and $h(\rpacod{X}{m}) = \rpacod{Y}{m}$, so 
    $\rpacod{Y}{m}$ is open too.
    Since $h$ and $h^{-1}$ are homeomorphisms, $T^m_X$ is a local 
    homeomorphism and \ref{C3} says $T_Y^m=h \circ T_X^m \circ h^{-1}$, so we 
    get that $\rpamap{Y}{m} \colon \rpadom{Y}{m} \to \rpacod{Y}{m}$ is also 
    local homeomorphism.

    Now suppose $(X, P, T_X)$ is directed.
    Let $m, n \in P$ be such that $\rpadom{Y}{m} \cap \rpadom{Y}{n} \neq 
    \emptyset$.
    Because $h$ is a injective and because of \ref{C2}, we have
    \[
        h(\rpadom{X}{m} \cap \rpadom{X}{n})
        = h(\rpadom{X}{m}) \cap h(\rpadom{X}{n})
        = \rpadom{Y}{m} \cap \rpadom{Y}{n},
    \]
    so $\rpadom{X}{m} \cap \rpadom{X}{n} \neq \emptyset$.
    Since $(X, P, T_X)$ is directed, there is $l \in P$ such that $m, n \leq 
    l$ and $\rpadom{X}{m} \cap \rpadom{X}{n} = \rpadom{X}{l}$.
    Again, because $h$ is a injective and because of \ref{C2}, we have
    \(
        \rpadom{Y}{m} \cap \rpadom{Y}{n}
        = h(\rpadom{X}{m}) \cap h(\rpadom{X}{n})
        = h(\rpadom{X}{m} \cap \rpadom{X}{n}) 
        = h(\rpadom{X}{l}) 
        = \rpadom{Y}{l}.
    \)
\end{proof}

In \cite[Theorem 3.1]{ABCE23}, conjugacy of Deaconu--Renault systems is 
characterised in terms of groupoid isomorphisms and C*-algebra 
*-isomorphisms.
We generalise a small part of this theorem via routine arguments alluded to 
in the proof of \cite[Proposition 3.8]{ABCE23}.

\begin{theorem}
    \label{thm:conjugate_actions_have_isomorphic_groupoids}
    Let $Q$ be a group, let $P$ be a subsemigroup with identity $e$, and let 
    $(X, P, T_X)$ and $(Y, P, T_Y)$ be locally compact and directed semigroup 
    actions.
    If $h \colon X \to Y$ is a conjugacy, then 
    \[
        \psi_h \colon \sdpg{X} \to \sdpg{Y}, \quad \psi_h((x, q, y)) 
        \coloneqq (h(x), q, h(y)),
    \]
    is an isomorphism such that, for each open $U, V \subseteq X$ and $m, n 
    \in P$, $\psi_h(\sdpgcylinder{X}{U}{m}{n}{V}) = 
    \sdpgcylinder{Y}{h(U)}{m}{n}{h(V)}$.
    Moreover, for each closed invariant $U \subseteq X$, $h(U) \subseteq Y$ 
    is closed and invariant, and $\psi_h$ restricts to an isomorphism from 
    $\reduction{\sdpg{X}}{U}$ to $\reduction{\sdpg{Y}}{h(U)}$.
\end{theorem}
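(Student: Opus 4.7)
The plan is to check four things in turn: that $\psi_h$ is a well-defined bijection, that it preserves the groupoid structure, that it is a homeomorphism via the claimed formula on basic open sets, and finally that it behaves well with respect to reductions. The backbone of everything is that $h^{-1}\colon Y \to X$ is also a conjugacy, since \ref{C1}--\ref{C3} are symmetric in $h$ and $h^{-1}$ once $h$ is a homeomorphism.

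First I would verify well-definedness: given $(x,q,y) \in \sdpg{X}$ with witnesses $q = mn^{-1}$, $x \in \rpadom{X}{m}$, $y \in \rpadom{X}{n}$ and $\rpa{x}{m} = \rpa{y}{n}$, conditions \ref{C2} and \ref{C3} immediately give $h(x) \in \rpadom{Y}{m}$, $h(y) \in \rpadom{Y}{n}$ and $\rpa{h(x)}{m} = h(\rpa{x}{m}) = h(\rpa{y}{n}) = \rpa{h(y)}{n}$, so $\psi_h((x,q,y)) \in \sdpg{Y}$. Applying the same argument to $h^{-1}$ shows $\psi_{h^{-1}}$ lands in $\sdpg{X}$, and the two are mutual inverses, giving bijectivity. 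Preservation of the groupoid structure is then immediate: a pair $((x,q,y),(w,r,z)) \in \sdpg{X} \times \sdpg{X}$ is composable iff $y=w$ iff $h(y)=h(w)$ (by injectivity of $h$), and the composition and inversion formulas translate directly because $\psi_h$ is the identity in the middle coordinate.

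Next, I would prove the basic-open-set image formula $\psi_h(\sdpgcylinder{X}{U}{m}{n}{V}) = \sdpgcylinder{Y}{h(U)}{m}{n}{h(V)}$ directly from \ref{C2}--\ref{C3}. The forward inclusion is pointwise; the reverse inclusion uses the fact that any point in $\sdpgcylinder{Y}{h(U)}{m}{n}{h(V)}$ has a unique $h$-preimage, which \ref{C2}--\ref{C3} place in $\sdpgcylinder{X}{U}{m}{n}{V}$. Because $h$ and $h^{-1}$ are homeomorphisms, the collection of $h(U)$ for $U \subseteq X$ open is exactly the collection of open subsets of $Y$, so $\psi_h$ sends a basis of the topology on $\sdpg{X}$ bijectively onto a basis of the topology on $\sdpg{Y}$, making it a homeomorphism.

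Finally, for a closed invariant $U \subseteq X$, the set $h(U)$ is closed because $h$ is a homeomorphism. To check invariance of $h(U)$, take $y \in h(U)$, $y' \in Y$, and $m,n \in P$ with $\rpa{y'}{m} = \rpa{y}{n}$; applying the conjugacy $h^{-1}$ yields $\rpa{h^{-1}(y')}{m} = \rpa{h^{-1}(y)}{n}$ with $h^{-1}(y) \in U$, so invariance of $U$ forces $h^{-1}(y') \in U$ and hence $y' \in h(U)$. The restriction of $\psi_h$ to $\reduction{\sdpg{X}}{U}$ is then a bijection onto $\reduction{\sdpg{Y}}{h(U)}$ by \cref{lem:inv}\ref{item2:inv}, and the subspace topologies make it a topological isomorphism automatically. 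The steps are largely bookkeeping; the only place requiring care is the basic-open-set image formula in the third step, because it has to be verified in both directions and then used to conclude that $\psi_h$ and $\psi_h^{-1}$ are both open, which is where the bulk of the writing will go.
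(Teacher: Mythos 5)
Your proposal is correct and follows essentially the same route as the paper's proof: well-definedness from \ref{C2}--\ref{C3}, bijectivity and preservation of composition from injectivity of $h$, the homeomorphism via the image formula on basic cylinder sets, and invariance of $h(U)$ by pulling the defining condition back through $h$. The only cosmetic difference is that you package surjectivity and the openness of $\psi_h^{-1}$ by observing that $h^{-1}$ is itself a conjugacy, whereas the paper argues surjectivity directly by choosing preimages; both amount to the same computation.
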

\begin{proof}
    Let $(x, mn^{-1}, y) \in \sdpg{X}$, where $m, n \in P$ are such that 
    $\rpa{x}{m} = \rpa{y}{n}$.
    By definition of a conjugacy, 
    \(
        \rpa{h(x)}{m} 
        = h(\rpa{x}{m}) 
        = h(\rpa{y}{n}) 
        = \rpa{h(y)}{n},
    \)
    and $h(x) \in \rpadom{Y}{m}$ and $h(y) \in \rpadom{Y}{n}$, so 
    $\psi_h((x, q, y)) = (h(x), mn^{-1}, h(y)) \in \sdpg{Y}$.
    That is, $\psi_h$ is well-defined.
    By \ref{C1}, $h$ is a bijection.
    If $(h(x), q, h(y)) = (h(x'), q', h(y'))$, then $q=q'$ and  $(x, q, y) = 
    (x', q', y')$ because $h$ is injective. 
    Thus $\psi_h$ is injective. 
    Fix $(y_1, q, y_2) \in \sdpg{Y}$.
    Then, there are $m, n \in P$ such that $\rpa{y_1}{m} = \rpa{y_2}{n}$
    and $mn^{-1} = q$.
    Since $h$ is surjective, there are $x_1, x_2 \in X$ such that $h(x_1) = 
    y_1$ and $h(x_2) = y_2$, so 
    \(
        h(\rpa{x_1}{m}) 
        = \rpa{h(x_1)}{m} 
        = \rpa{h(x_2)}{n} 
        = h(\rpa{x_2}{n}).
    \)
    Since $h$ is injective, $\rpa{x_1}{m} = \rpa{x_2}{n}$, and so 
    $(x_1, q, x_2) \in \sdpg{X}$.
    Moreover, $(y_1, q, y_2) = \psi_h(x_1, q, x_2)$. 
    Thus, $\psi_h$ is surjective.
    By similar arguments, we can show $\psi_h(\sdpgcylinder{X}{U}{m}{n}{V}) = 
    \sdpgcylinder{Y}{h(U)}{m}{n}{h(V)}$, for any open $U, V \subseteq X$ and 
    $m, n \in P$.
    The collection of such $\sdpgcylinder{X}{U}{m}{n}{V}$ is a basis and 
    $\psi_h$ is a bijection, so it follows that $\psi_h$ is a homeomorphism.
    For any $(x, q, y), (w, r, z) \in \sdpg{X}$, $y = w$ if and only if $h(y) 
    = h(w)$ because $h$ is injective, and so $((x, q, y), (w, r, z)) \in 
    \composablepairs{\sdpg{X}}$ if and only if $(\psi_h((x, q, y)), 
    \psi_h((w, r, z))) \in \composablepairs{\sdpg{Y}}$, in which case 
    \[
        \psi_h((x, q, y))\psi_h((w, r, z)) 
        = (h(x), qr, h(z))
        = \psi_h((x, qr, z))
        = \psi_h((x, q, y)(w, r, z)).
    \]
    Moreover, 
    \[
        \psi_h((x, q, y))^{-1}
        = (h(x), q, h(y))^{-1}
        = (h(y), q^{-1}, h(x))
        = \psi_h(y, q^{-1}, x)
        = \psi_h((x, q, y)^{-1}).
    \]
    Therefore, $\psi_h$ is an isomorphism.

    Now suppose $U \subseteq X$ is closed and invariant.
    Fix $h(x) \in Y$, $h(y) \in h(U)$ and $m, n \in P$ such that 
    $\rpa{h(x)}{m} = \rpa{h(y)}{n}$.
    By \ref{C3}, $h(\rpa{x}{m}) = h(\rpa{y}{n})$, so $\rpa{x}{m} = 
    \rpa{y}{n}$.
    Since $h(y) \in h(U)$, $y \in U$, and so $x \in U$ because $U$ is 
    invariant.
    Thus, $h(x) \in h(U)$, so $h(U)$ is invariant.
    Also, $h(U)$ is closed because $h$ is a homeomorphism.
    By \cref{lem:inv}\eqref{item2:inv}, $\reduction{\sdpg{X}}{U}$ and
    $\reduction{\sdpg{Y}}{h(U)}$ are \'etale locally compact Hausdorff closed 
    subgroupoids of $\sdpg{X}$ and $\sdpg{Y}$, respectively.
    We have that $\psi_h(\reduction{\sdpg{X}}{U}) = 
    \reduction{\sdpg{Y}}{h(U)}$. 
    Since $\psi_h$ is an isomorphism that maps the subgroupoid 
    $\reduction{\sdpg{X}}{U}$ to the subgroupoid 
    $\reduction{\sdpg{Y}}{h(U)}$, $\psi_h$ restricts to an isomorphism of 
    these subgroupoids.
\end{proof}

\section{Path groupoids via filters and graph morphisms}
    \label{section:graphs}

In \cref{sec:filters}, we recall the construction of path groupoids of 
finitely aligned $P$-graphs \textit{via filters} from \cite{arXiv_Jon25}.
In \cref{sec:morphisms}, we generalise the construction of path groupoids of 
finitely aligned $k$-graphs \textit{via graph morphisms} (e.g. \cite{Yee07}) 
to include $P$-graphs, where $(Q, P)$ is a weakly quasi-lattice ordered 
group.
We note filters and graph morphisms are not the only two notions of paths: In 
\cite{BY17}, paths are defined as sections of functors induced by a discrete 
Conduch\'e fibration. 
This approach has been further developed in \cite{Wol23}, where only infinite 
paths are considered.
We have not examined whether filters and graph morphisms coincide with the 
paths of \cite{BY17, Wol23} in our setting.

\subsection{Filter approach}
    \label{sec:filters}

Filters (or similar) have been used as models of paths in $P$-graphs for some 
time (e.g. \cite{Nic92} for quasi-lattice ordered groups, \cite{Exe08} for 
semigroupoids, \cite{Spi12} for categories of paths, and \cite{BSV13} for 
$P$-graphs---though no groupoid is used).

\subsubsection{Path groupoid}
    \label{sec:path_groupoid}

We recall the path and boundary-path groupoids of finitely aligned $P$-graphs 
via filters from \cite{arXiv_Jon25}.
A \define{filter} of a $P$-graph $\Lambda$ is a nonempty subset $x$ of 
$\Lambda$ that is \define{hereditary} (i.e. $\lambda \preceq \mu \in x$ 
implies $\lambda \in x$) and \define{directed} (i.e. $\mu,\nu \in x$ implies 
there is $\lambda \in x$ such that $\mu,\nu \preceq \lambda$).

\begin{definition}[Path space of filters]
    \label{def:PSfilter}
    Let $(Q, P)$ be a weakly quasi-lattice ordered group, and let $\Lambda$ 
    be a finitely aligned $P$-graph.
    We write $\PS{\F}{\Lambda}$ for the set of filters of $\Lambda$.
\end{definition}

The set $\PS{\F}{\Lambda}$ is the first of many presentations of the path 
space of a $P$-graph $\Lambda$ considered in this paper. 
The various notation can be found in \cref{sec:notation}.
Each $x \in \PS{\F}{\Lambda}$ is directed, and so the restriction $\d|_x$ is 
injective (see \cite[Lemma 6.6(b)]{RW17}).
For each $\lambda \in \Lambda$, we write 
\[
    \principal{\lambda} \coloneqq \setof{
        \mu \in \Lambda
    }{
        \mu \preceq \lambda
    } \in \PS{\F}{\Lambda}.
\]
For any $x \in \PS{\F}{\Lambda}$, there is a unique element of $x \cap 
\Lambda^{(0)}$ denoted by $\r(x)$.
Given finite (possibly empty) $K_1, K_2 \subseteq \Lambda$ and $\mu \in 
\Lambda$, we write
\begin{align*}
    \Fell{\PS{\F}{\Lambda}}{K_1}{K_2}
    &\coloneqq \setof{
        x \in \PS{\F}{\Lambda}
    }{
        K_1 \subseteq x \subseteq \Lambda \setminus K_2
    }, \\
    \Fell{\PS{\F}{\Lambda}}{\mu}{K_2}
    &\coloneqq \Fell{\PS{\F}{\Lambda}}{\{\mu\}}{K_2}, \text{ and } \\
    \Fellin{\PS{\F}{\Lambda}}{\mu}
    &\coloneqq \Fell{\PS{\F}{\Lambda}}{\mu}{\emptyset}.
\end{align*}
The collection of such $\Fell{\PS{\F}{\Lambda}}{K_1}{K_2}$ is a basis for a 
topology on $\PS{\F}{\Lambda}$.
Identifying $\PS{\F}{\Lambda}$ as a subcollection of maps from $\Lambda$ to 
the discrete space $\{0, 1\}$ realises the topology on $\PS{\F}{\Lambda}$ as 
the topology of pointwise convergence (also called the `pointwise topology') 
in the sense of \cite[Definition 42.1]{Wil70}. 
In some settings, including \cite{ACaHJL}, such topologies on subsets of the 
powerset of a set are called `patch topologies' (see the note about this 
terminology in \cite[Page 293]{LV21}).
The motivation for the name `pointwise convergence' is explained by 
\cite[Theorem 42.2]{Wil19}, which simplifies as follows in our setting.
A sequence $(x_n) \subseteq \PS{\F}{\Lambda}$ converges to $x \in 
\PS{\F}{\Lambda}$ if and only if for each $\lambda \in x$ there is an $N \in 
\mathbb{N}$ such that $N \leq n \implies \lambda \in x_n$ and for each 
$\lambda \in \Lambda \setminus x$ there is an $N \in \mathbb{N}$ such that $N 
\leq n \implies \lambda \in \Lambda \setminus x_n$.
By \cite[Remark 4.2 and Theorem 4.9]{arXiv_Jon25}, if $\Lambda$ is finitely 
aligned, then the collection of $\Fell{\PS{\F}{\Lambda}}{\mu}{K}$, where $\mu 
\in \Lambda$ and $K \subseteq \mu\Lambda$ is finite, is a countable basis of 
compact open sets, and $\PS{\F}{\Lambda}$ is locally compact and Hausdorff.

By \cite[Lemma 3.4]{BSV13}, for each $\lambda \in \Lambda$, the map
\[
    \Fellin{\PS{\F}{\Lambda}}{\lambda} 
    \to \Fellin{\PS{\F}{\Lambda}}{\s(\lambda)},
    \quad 
    x 
    \mapsto \PSfiltershiftoff{\lambda}{x} 
    \coloneqq \setof{\mu \in \Lambda}{\lambda\mu \in x},
\]
is a bijection with inverse 
\[
    \Fellin{\PS{\F}{\Lambda}}{\s(\lambda)} 
    \to \Fellin{\PS{\F}{\Lambda}}{\lambda},
    \quad 
    x 
    \mapsto \PSfiltershifton{\lambda}{x} 
    \coloneqq \setof{
        \zeta \in \Lambda
    }{
        \zeta \preceq \lambda\mu \text{ for some } \mu \in x
    }.
\]
Even if $\Lambda$ is not finitely aligned, $x \mapsto 
\PSfiltershiftoff{\lambda}{x}$ is continuous \cite[Lemma 5.3]{arXiv_Jon25} 
and its inverse $x \mapsto \PSfiltershifton{\lambda}{x}$ has some 
continuity-like behaviour \cite[Lemma 5.13]{arXiv_Jon25}.
If $\Lambda$ is finitely aligned, then for each $\lambda \in \Lambda$ the 
subspace $\Fellin{\PS{\F}{\Lambda}}{\lambda}$ has a basis of sets of the form 
$\Fell{\PS{\F}{\Lambda}}{\kappa}{K}$, where $\lambda \preceq \kappa$ and $K 
\subseteq \kappa\Lambda$ is finite, by arguments similar to 
\cite[Lemma 6.2]{arXiv_Jon25}. 
The image of such a $\Fell{\PS{\F}{\Lambda}}{\kappa}{K}$ under $x \mapsto 
\PSfiltershiftoff{\lambda}{x}$ is open, and so $x \mapsto 
\PSfiltershiftoff{\lambda}{x}$ is an open map.
Therefore, if $\Lambda$ is finitely aligned, then $x \mapsto 
\PSfiltershiftoff{\lambda}{x}$ is a homeomorphism.

Following \cite[Lemma 5.1]{arXiv_Jon25}, given $(\lambda, \mu) \in 
\composablepairs{\Lambda}$ and $x \in \PS{\F}{\Lambda}$, if $\lambda\mu \in 
x$, then $\PSfiltershiftoff{\mu}{(\PSfiltershiftoff{\lambda}{x})} = 
\PSfiltershiftoff{(\lambda\mu)}{x}$, and if $\s(\mu) = \r(x)$, then 
$\PSfiltershifton{\lambda}{(\PSfiltershifton{\mu}{x})} = 
\PSfiltershifton{(\lambda\mu)}{x}$.
By \cite[Lemma 3.4]{BSV13}, if $\mu \in x \in \PS{\F}{\Lambda}$ and 
$(\lambda, \mu) \in \composablepairs{\Lambda}$, then 
$\PSfiltershifton{\mu}{(\PSfiltershiftoff{\mu}{x})} = x$ and 
$\PSfiltershiftoff{\lambda}{(\PSfiltershifton{\lambda}{x})} = x$.
As per \cite[Definition 5.8]{arXiv_Jon25}, we define
\[
    \PS{\F}{\Lambda} * P \coloneqq 
    \setof{
        (x, m) \in \PS{\F}{\Lambda} \times P
    }{
        x \cap \Lambda^m \neq \emptyset
    },
\]
and we define $T_{\PS{\F}{\Lambda}} \colon \PS{\F}{\Lambda} * P \to 
\PS{\F}{\Lambda}$ to map each $(x, m) \in \PS{\F}{\Lambda} * P$ to 
$\rpa{x}{m} \coloneqq \PSfiltershiftoff{\mu}{x}$, where $\mu$ is the unique 
element of $x \cap \Lambda^m$.
We can now state the finitely aligned case of 
\cite[Theorem 5.14]{arXiv_Jon25}.

\begin{proposition}[Theorem 5.14 of \cite{arXiv_Jon25}]
    Let $(Q, P)$ be a weakly quasi-lattice ordered group, and let $\Lambda$ 
    be a finitely aligned $P$-graph.
    Then, $(\PS{\F}{\Lambda}, P, T_{\PS{\F}{\Lambda}})$ is a locally compact 
    and directed semigroup action.
\end{proposition}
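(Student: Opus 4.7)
The plan is to verify in turn the axioms (S1) and (S2) for a semigroup action, then local compactness (locally compact Hausdorff base space, open domains and codomains, local-homeomorphism shifts), and finally directedness. Almost all of the technical work is already packaged in the preceding discussion, so the argument should be mostly an assembly job once one makes the right identifications.

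For (S1) I would observe that $\r(x) \in x \cap \Lambda^e$ for every filter $x$, so $(x, e) \in \PS{\F}{\Lambda} * P$ and $\rpa{x}{e} = \PSfiltershiftoff{\r(x)}{x} = x$. For (S2), I would use unique factorisation together with the composition identity $\PSfiltershiftoff{\nu}{(\PSfiltershiftoff{\mu}{x})} = \PSfiltershiftoff{(\mu\nu)}{x}$ recalled above: given $\lambda \in x \cap \Lambda^{mn}$, factor $\lambda = \mu\nu$ with $\d(\mu) = m$, $\d(\nu) = n$; heredity forces $\mu \in x$, so $\rpa{x}{m} = \PSfiltershiftoff{\mu}{x}$ is defined, $\nu \in \PSfiltershiftoff{\mu}{x}$ since $\mu\nu = \lambda \in x$, and the composition identity yields $\rpa{(\rpa{x}{m})}{n} = \rpa{x}{mn}$. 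The converse direction reverses this: combine any witnesses $\mu \in x \cap \Lambda^m$ and $\nu \in \PSfiltershiftoff{\mu}{x} \cap \Lambda^n$ into a single witness $\mu\nu \in x \cap \Lambda^{mn}$.

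For local compactness, the space $\PS{\F}{\Lambda}$ is already locally compact Hausdorff by the earlier discussion. The key observation is the decomposition
\[
    \rpadom{\PS{\F}{\Lambda}}{m} = \bigcup_{\mu \in \Lambda^m} \Fellin{\PS{\F}{\Lambda}}{\mu},
\]
which is open as a union of basic open sets. Since $\d|_x$ is injective for each filter $x$, the element $\mu$ is the unique member of $x \cap \Lambda^m$ whenever $x \in \Fellin{\PS{\F}{\Lambda}}{\mu}$, so $\rpamap{\PS{\F}{\Lambda}}{m}$ restricts on each $\Fellin{\PS{\F}{\Lambda}}{\mu}$ to the shift $x \mapsto \PSfiltershiftoff{\mu}{x}$, which is a homeomorphism onto $\Fellin{\PS{\F}{\Lambda}}{\s(\mu)}$. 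This simultaneously delivers the local-homeomorphism property of $\rpamap{\PS{\F}{\Lambda}}{m}$ and the openness of $\rpacod{\PS{\F}{\Lambda}}{m} = \bigcup_{\mu \in \Lambda^m} \Fellin{\PS{\F}{\Lambda}}{\s(\mu)}$.

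The most substantive step, and the only one that genuinely invokes the weakly quasi-lattice ordered hypothesis on $(Q, P)$, is directedness. Given $x \in \rpadom{\PS{\F}{\Lambda}}{m} \cap \rpadom{\PS{\F}{\Lambda}}{n}$ with witnesses $\mu \in x \cap \Lambda^m$ and $\nu \in x \cap \Lambda^n$, the filter $x$ contains a common upper bound $\kappa$ with $\mu, \nu \preceq \kappa$, so $m, n \leq \d(\kappa)$; the weakly quasi-lattice ordered hypothesis then promotes this common upper bound to a least upper bound $l \in P$. To upgrade this to $\rpadom{\PS{\F}{\Lambda}}{m} \cap \rpadom{\PS{\F}{\Lambda}}{n} \subseteq \rpadom{\PS{\F}{\Lambda}}{l}$, I would rerun the same argument for arbitrary $y$ in the intersection, obtaining some $\kappa' \in y$ with $l \leq \d(\kappa')$; unique factorisation then extracts a prefix of $\kappa'$ of degree exactly $l$, which lies in $y$ by heredity. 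I expect the only real obstacle is recognising that the weakly quasi-lattice ordered structure is exactly the right hypothesis to produce the $l$ demanded by the directedness axiom; after that, the remaining bookkeeping is routine.
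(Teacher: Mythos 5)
Your proposal is correct. Note that the paper does not actually prove this proposition: it is imported wholesale from the cited reference (Theorem 5.14 of \cite{arXiv_Jon25}), so there is no in-paper argument to compare against. Your assembly is the natural one and uses precisely the ingredients the paper recalls beforehand --- uniqueness of the witness $\mu \in x \cap \Lambda^m$ via injectivity of $\d|_x$, the composition identity for the shifts to get \ref{S2}, the decomposition $\rpadom{\PS{\F}{\Lambda}}{m} = \bigcup_{\mu \in \Lambda^m} \Fellin{\PS{\F}{\Lambda}}{\mu}$ together with the fact that $x \mapsto \PSfiltershiftoff{\mu}{x}$ is a homeomorphism of $\Fellin{\PS{\F}{\Lambda}}{\mu}$ onto $\Fellin{\PS{\F}{\Lambda}}{\s(\mu)}$ in the finitely aligned case, and the weakly quasi-lattice ordered hypothesis (applied to the common upper bound $\d(\kappa)$ supplied by directedness of the filter, then propagated to every $y$ in the intersection by unique factorisation and heredity) for directedness of the action. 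I see no gaps.
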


Recall the semidirect product groupoids of semigroup actions from 
\cref{sec:semidirect_product_groupoid_prelims}.

\begin{definition}[Path groupoid of filters]
    \label{def:PGfilter}
    Let $(Q, P)$ be a weakly quasi-lattice ordered group, and let $\Lambda$ 
    be a finitely aligned $P$-graph.
    We write $\PG{\F}{\Lambda}$ for the semidirect product groupoid of 
    $(\PS{\F}{\Lambda}, P, T_{\PS{\F}{\Lambda}})$.
\end{definition}

\cref{sec:notation} presents $\PG{\F}{\Lambda}$ alongside the other 
descriptions of the path groupoid of $\Lambda$ considered in this paper.
Since $\PS{\F}{\Lambda}$ has a countable basis of compact open sets, it 
follows from the discussion in \cref{sec:semidirect_product_groupoid_prelims}
that $\PG{\F}{\Lambda}$ is ample, Hausdorff and second-countable. 
If $Q$ is countable and amenable (e.g. if $\Lambda$ is a $k$-graph), then it 
follows from \cite[Theorem 5.13]{RW17} that $\PG{\F}{\Lambda}$ is amenable
(see also \cite[\S 5.4]{arXiv_Jon25}).
By \cite[Theorem 6.6]{arXiv_Jon25}, for any finitely aligned $P$-graph 
$\Lambda$, $\PG{\F}{\Lambda}$ is isomorphic to Spielberg's groupoid from 
\cite{Spi20}.
By \cite[Remark 6.7]{arXiv_Jon25}, for any (discrete) row-finite $P$-graph 
$\Lambda$, $\PG{\F}{\Lambda}$ is isomorphic to the Toeplitz groupoid of 
\cite[Definition 6.11]{RW17}, so when $\PG{\F}{\Lambda}$ is amenable its 
C*-algebra coincides with the Toeplitz algebra of 
\cite[Definition 6.11]{RW17}.
We show in \cref{sec:k} that $\PG{\F}{\Lambda}$ is isomorphic to the path 
groupoids from \cite{FMY05, Yee07} when $\Lambda$ is a $k$-graph.

\subsubsection{Boundary-path groupoid}

While $\PG{\F}{\Lambda}$ from \cref{def:PGfilter} coincides with the 
\emphasis{Toeplitz} groupoid of \cite[Definition 6.11]{RW17} for (discrete) 
row-finite $P$-graphs $\Lambda$, a certain subgroupoid of $\PG{\F}{\Lambda}$
will coincide with the \emphasis{Cuntz--Krieger} groupoid from 
\cite[Definition 6.18]{RW17} whose C*-algebra is called the Cuntz--Krieger 
algebra of $\Lambda$.
We refer the reader to \cite{BSV13, arXiv_Hub21, HKLQ24} for C*-algebras 
generated by representations of $P$-graphs.
In particular, by \cite[Corollary 5.5]{BSV13}, the C*-algebras associated to 
finitely aligned $P$-graphs in \cite[Theorem 5.3]{BSV13} generalise the 
Cuntz--Krieger algebras of finitely aligned $k$-graphs from \cite{RSY04}. 
We note that \cite{RW17} does not consider how the Cuntz--Krieger algebra of 
\cite[Definition 6.18]{RW17} relates to the C*-algebra from 
\cite[Theorem 5.3]{BSV13}.

Following \cite{BSV13}, we say $x \in \PS{\F}{\Lambda}$ is an 
\define{ultrafilter} if $x \subseteq y \in \PS{\F}{\Lambda} \implies x = y$. 
In a directed graph with no \define{sources} (i.e. vertices that receive no 
edges), the infinite paths in the graph corresponds with the ultrafilters of 
the finite-path category of the graph. 
Thus, the set of ultrafilters need not be closed in $\PS{\F}{\Lambda}$: For 
example, in the directed graph with one vertex $v$ and edges $e_n$ for each 
$n \in \N$, the sequence $(e_ne_n\cdots)_n$ converges to $v$.
Hence, as Paterson puts it in \cite{Pat02}, a ``peculiarity of [the groupoid 
whose C*-algebra coincides with the Cuntz--Krieger algebra] is that we have 
to include some \emphasis{finite} paths''. 
Like in the more general setting of finitely aligned categories of paths 
\cite[Page 729]{Spi12}, this is achieved by including the limit points of 
ultrafilters of $\Lambda$.

\begin{definition}[Boundary-path space of filters]
    \label{def:BPSfilter}
    Let $(Q, P)$ be a weakly quasi-lattice ordered group, and let $\Lambda$ 
    be a finitely aligned $P$-graph.
    We define $\BPS{\F}{\Lambda}$ to be the closure in $\PS{\F}{\Lambda}$ of 
    the set of ultrafilters of $\Lambda$.
\end{definition}

We say $E \subseteq \Lambda$ is \define{exhaustive}
\label{def:exhaustive} 
if, for all $\lambda$ with $\r(\lambda) \in \r(E)$, there is a $\mu \in E$ 
such that $\lambda\Lambda \cap \mu\Lambda \neq \emptyset$.
Given $\mu \in x \in \PS{\F}{\Lambda}$, we say $\mu$ is \define{extendable}
\label{def:extendable}
in $x$ if, whenever $E \subseteq \Lambda$ is finite, exhaustive and $\s(\mu) 
\in \r(E)$, there is a $\nu \in \Lambda$ such that $\mu\nu \in x$.
The following result is a special case of \cite[Theorem 7.8]{Spi14}.
A simplified presentation of Spielberg's proof of the result is given for 
this special case in \cite[\S 5.4.1]{Jon24}.

\begin{proposition}[Theorem 7.8 of \cite{Spi14}]
    A filter $x \in \BPS{\F}{\Lambda}$ if and only if every $\mu \in x$ is 
    extendable in $x$.   
\end{proposition}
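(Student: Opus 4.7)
The plan is to prove both directions separately, leveraging the definition of $\BPS{\F}{\Lambda}$ as the closure of the ultrafilters together with the characterisation of convergence in the pointwise topology described in the excerpt.

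For the forward direction, I would show that the set $B \coloneqq \{y \in \PS{\F}{\Lambda} : \text{every } \mu \in y \text{ is extendable in } y\}$ is closed in $\PS{\F}{\Lambda}$ and contains every ultrafilter, so that by minimality of the closure it contains $\BPS{\F}{\Lambda}$. For the ultrafilter inclusion: if $y$ is an ultrafilter, $\mu \in y$, and $E$ is finite exhaustive with $\s(\mu) \in \r(E)$, I would assume no witness $\nu$ exists and use exhaustiveness of $E$ together with finite alignment to enlarge $y$ to a strictly larger filter, contradicting maximality. For closedness: given $x_n \to x$ in $B$ with $\mu \in x$ and $E$ finite exhaustive at $\s(\mu)$, the pointwise convergence yields $\mu \in x_n$ eventually, so each such $x_n$ supplies a witness $\nu_n$; since $E$ (and hence the pool of candidate witnesses) is finite, some $\nu$ occurs cofinally, and pointwise convergence transfers $\mu\nu \in x_n$ to $\mu\nu \in x$.

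For the reverse direction, I would show every basic neighbourhood of $x$ contains an ultrafilter, so that $x$ lies in the closure of the ultrafilters. Reducing via directedness of $x$ and finite alignment, it suffices to find an ultrafilter in a set of the form $\Fell{\PS{\F}{\Lambda}}{\mu}{K}$ with $\mu \in x$ and $K \subseteq \mu\Lambda$ finite and disjoint from $x$. I would apply Zorn's lemma to the poset of filters containing $\mu$ and disjoint from $K$, ordered by inclusion; this poset contains $x$, and unions of chains remain in it because $K$ is finite. A Zorn-maximal element $y$ must in fact be a $\PS{\F}{\Lambda}$-ultrafilter: any strictly larger filter extension would, by maximality, meet $K$, and one uses extendability of a suitably chosen element of $y$ against an exhaustive set manufactured from $K$ via finite alignment to derive a contradiction.

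The main obstacle is the reverse direction, specifically the engineering of a finite exhaustive set from the forbidden set $K$. The subtlety is that Zorn-maximality only yields maximality \emph{subject to avoiding $K$}, while to conclude $y$ is an ultrafilter in $\PS{\F}{\Lambda}$ one must rule out \emph{any} proper filter extension. The bridge between these two notions of maximality is the extendability hypothesis applied to $\mu$ (inherited from $x$ through the Zorn construction) together with a finite exhaustive set built from $K$, and it is precisely finite alignment that guarantees such an exhaustive set exists. The closedness argument in the forward direction is comparatively mechanical, but it also hinges essentially on finiteness of $E$ so that only finitely many candidate witnesses must be tracked along the convergent sequence.
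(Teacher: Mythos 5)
First, a point of reference: the paper does not prove this proposition at all — it quotes it as a special case of Theorem 7.8 of \cite{Spi14} and points to \cite[\S 5.4.1]{Jon24} for a written-out proof — so your argument can only be assessed on its own terms. Your forward direction is sound in outline: the closedness of the set $B$ is exactly the pigeonhole-plus-pointwise-convergence argument you describe (and second countability makes the sequential argument legitimate). The one place your sketch compresses real work is the ultrafilter step: after using exhaustiveness, directedness of $y$ and finiteness of $E$ to find a single $\nu \in E$ with $\lambda\Lambda \cap \mu\nu\Lambda \neq \emptyset$ for \emph{every} $\lambda \in y$, you still have to exhibit a filter containing $y \cup \{\mu\nu\}$; the downward closure of the common extensions need not be directed, and one needs finite alignment together with countability (write $y = \bigcup_n \principal{\lambda_n}$ and run a K\"onig's lemma argument over the finite sets of minimal common extensions of $\lambda_n$ and $\mu\nu$) to extract a compatible thread. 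You correctly identify finite alignment as the ingredient, but this is the crux of that direction, not a routine enlargement.

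The reverse direction has a genuine gap: a Zorn-maximal element of your poset need not be an ultrafilter, and the proposed bridge cannot repair this because the exhaustive set you would manufacture from $K$ need not be exhaustive. Concretely, take the $1$-graph on the directed graph with a vertex $v$ receiving edges $e_n$ for all $n \in \N$, where each $\s(e_n)$ receives exactly one edge $f_n$ and the graph continues indefinitely beyond each $\s(f_n)$. The filter $x = \{v\}$ satisfies the extendability hypothesis (any finite exhaustive set with $v$ in its range must contain $v$, since a finite set of nontrivial paths misses the first edge $e_m$ for large $m$). Taking $\mu = v$ and $K = \{e_1f_1\}$, the filter $\principal{e_1} = \{v, e_1\}$ lies in your poset and is maximal in it — every proper filter extension must pass through $f_1$ and hence meet $K$ — yet it is not an ultrafilter. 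The conclusion of the proposition still holds, witnessed by the ultrafilter determined by $e_2f_2\cdots$, but your construction does not locate it; and the set $K' = \{\kappa' : \mu\kappa' \in K\} = \{e_1f_1\}$ is not exhaustive at $v$, so extendability of $\mu \in x$ gives no contradiction. The repair is to argue globally rather than through one maximal filter: suppose \emph{no} ultrafilter lies in $\Fell{\PS{\F}{\Lambda}}{\mu}{K}$; then every ultrafilter containing $\mu$ meets $K$, so extending each $\principal{\mu\lambda}$ (for $\lambda \in \s(\mu)\Lambda$) to an ultrafilter via Zorn and using directedness and left cancellation shows every such $\lambda$ is compatible with some element of $K'$, i.e.\ $K'$ is finite and exhaustive with $\s(\mu) \in \r(K')$; extendability of $\mu$ in $x$ then produces $\kappa' \in K'$ with $\mu\kappa' \in x \cap K$, contradicting $K \cap x = \emptyset$. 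Note that in this corrected argument the extendability hypothesis is applied only to $\mu$, which genuinely lies in $x$, avoiding the issue that elements of your Zorn-maximal $y$ outside $x$ carry no extendability at all.
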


\begin{definition}[Boundary-path groupoid of filters]
    \label{def:BPGfilter}
    Let $(Q, P)$ be a weakly quasi-lattice ordered group, and let $\Lambda$ 
    be a finitely aligned $P$-graph.
    We write $\BPG{\F}{\Lambda}$ for the reduction of $\PG{\F}{\Lambda}$ to 
    the closed invariant subset $\BPS{\F}{\Lambda}$ of $\PS{\F}{\Lambda}$.
\end{definition}

Recall from \cref{sec:path_groupoid} that $\PG{\F}{\Lambda}$ is ample, 
Hausdorff and second-countable, and if $Q$ is countable and amenable, then 
$\PG{\F}{\Lambda}$ is amenable. 
Since $\BPG{\F}{\Lambda}$ is the reduction of $\PG{\F}{\Lambda}$ to the 
closed invariant subset $\BPS{\F}{\Lambda}$ of $\PS{\F}{\Lambda}$, 
$\BPG{\F}{\Lambda}$ is ample, Hausdorff and second-countable, and if $Q$ is 
countable and amenable, then $\BPG{\F}{\Lambda}$ is amenable.

\subsection{Graph morphism approach}
    \label{sec:morphisms}

Graph morphisms have been used to model paths in $P$-graphs since the 
inception of higher-rank graphs in \cite{KP00}.
The \textit{boundary}-path groupoid of $P$-graphs where $P$ is a finitely 
generated abelian cancellative monoid was introduced in \cite{CKSS14}.
Describing the \textit{path} groupoid of $P$-graphs, which is one of the 
contributions of this paper, is more technical.
In this section, given any finitely aligned $P$-graph $\Lambda$, where 
$(Q, P)$ is a weakly quasi-lattice ordered group, we construct a path space 
$\PS{\M}{\Lambda}$ and a boundary-path space $\BPS{\M}{\Lambda}$ whose 
elements are certain graph morphisms.
The technicality of this approach emphasises the value of the filter 
approach, especially in the setting of $P$-graphs.
In \cref{sec:conjugacy_of_filters_and_morphisms}, we show the actions of $P$ 
on $\PS{\F}{\Lambda}$ and $\PS{\M}{\Lambda}$ are conjugate so that the path 
and boundary-path groupoids are isomorphic by 
\cref{thm:conjugate_actions_have_isomorphic_groupoids}.
In \cref{sec:k}, we show the path and boundary-path groupoids coincide with 
those of \cite{Yee07} when $\Lambda$ is a finitely aligned (discrete) 
$k$-graph.

\subsubsection{Path groupoid}

The first challenge is to define a sensible notion of $P$-path prototypes 
that generalise the rank-$k$ path prototypes $\Omega_{k, m}$ of \cite{Yee07}. 

\begin{proposition}
    \label{prop:union_of_nested_graphs}
    Let $(Q, P)$ be a weakly quasi-lattice ordered group, and let 
    $(\Lambda_n)$ be a sequence of $P$-graphs such that, for all $n \in \N$,
    $\Lambda_n \subseteq \Lambda_{n+1}$ and the inclusion map from 
    $\Lambda_n$ to $\Lambda_{n+1}$ is a $P$-graph morphism.
    The set $\Lambda \coloneqq \bigcup_{n} \Lambda_n$ is a $P$-graph with 
    operations $(\lambda, \mu) \mapsto \lambda\mu$ from 
    $\composablepairs{\Lambda} \coloneqq \bigcup_{n} 
    \composablepairs{\Lambda_n}$ to $\Lambda$ and $\d(\lambda) \coloneqq 
    \d_n(\lambda)$, where $\lambda \in \Lambda_n$. 
    We call $\Lambda$ the \define{direct limit} of $(\Lambda_n)$.
\end{proposition}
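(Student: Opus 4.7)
The plan is to verify, in sequence, that $\Lambda$ is a countable small category, that $\d$ is a well-defined functor to $P$, and finally that the unique factorisation property is inherited from each $\Lambda_n$. The guiding principle throughout is that any finite collection of elements of $\Lambda$ lies in some single $\Lambda_n$, which lets us transfer statements about finitely many morphisms from the individual $\Lambda_n$ to the union.

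First I would check that the proposed operations are well-defined. If $\lambda \in \Lambda_n \cap \Lambda_m$ with $n \leq m$, the inclusion $\Lambda_n \hookrightarrow \Lambda_m$ is a $P$-graph morphism, hence it is a functor that preserves the degree, so $\d_n(\lambda) = \d_m(\lambda)$; this makes $\d(\lambda) \coloneqq \d_n(\lambda)$ unambiguous and in particular $\Lambda^e = \bigcup_n \Lambda_n^e$ serves as the object set. For composition, if $(\lambda, \mu) \in \composablepairs{\Lambda_n} \cap \composablepairs{\Lambda_m}$, then since the inclusion preserves composition, the composite $\lambda\mu$ computed in $\Lambda_n$ equals the one computed in $\Lambda_m$. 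The category axioms (associativity, existence and uniqueness of identities, and compatibility of $\r$ and $\s$) then reduce to the same axioms in a single $\Lambda_N$ containing whichever two or three morphisms are being tested. Countability of $\Lambda$ follows from a countable union of countable sets, and the functoriality of $\d$ is exactly the statement that each $\d_n$ is a functor, again verified pair-by-pair in some $\Lambda_N$.

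Next I would establish the unique factorisation property. Fix $\lambda \in \Lambda$ and $p, q \in P$ with $\d(\lambda) = pq$. Pick $n$ with $\lambda \in \Lambda_n$; since $\Lambda_n$ is a $P$-graph, there exist unique $\mu_n, \nu_n \in \Lambda_n$ with $\lambda = \mu_n\nu_n$, $\d(\mu_n) = p$ and $\d(\nu_n) = q$, which gives existence in $\Lambda$. For uniqueness in $\Lambda$, suppose $\mu, \nu \in \Lambda$ also satisfy $\lambda = \mu\nu$, $\d(\mu) = p$, $\d(\nu) = q$. Choose $m \geq n$ with $\mu, \nu \in \Lambda_m$; then both factorisations live inside the $P$-graph $\Lambda_m$, where unique factorisation forces $\mu = \mu_n$ and $\nu = \nu_n$.

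I do not expect a substantive obstacle: the argument is essentially bookkeeping of the direct limit kind, and the hypothesis that each inclusion is a $P$-graph morphism is precisely what is needed to ensure compatibility of degrees and compositions across the tower. The only point requiring mild care is confirming that $P$-graph morphisms preserve the categorical structure strictly enough for the check $\d_n(\lambda) = \d_m(\lambda)$ to be automatic, but this is immediate from the definition of a $P$-graph morphism recalled in \cref{sec:Pgraphs} together with the fact that the inclusion is a functor.
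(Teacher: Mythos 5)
Your proposal is correct and follows essentially the same route as the paper: reduce every finite verification (well-definedness of $\d$ and composition, the category axioms, and both existence and uniqueness in the factorisation property) to a single $\Lambda_N$ containing the finitely many morphisms involved, using that the inclusions are degree-preserving functors. The paper's proof is just a terser version of the same argument, taking $l = \max\{i,j,k\}$ where you take $m \geq n$.
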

\begin{proof}
    We have that $\Lambda$ is a small category since the inclusion maps are 
    functors.
    The map $\d \colon \Lambda \to P$ is well-defined because the inclusion 
    maps are degree-preserving.
    Suppose $\d(\lambda) = mn$ for some $m, n \in P$.
    Say $\lambda \in \Lambda_k$ so that $\d_k(\lambda) = mn$.
    Because $\Lambda_k$ is a $P$-graph, there are unique $\mu \in \Lambda_k^m 
    \subseteq \Lambda^m$ and $\nu \in \Lambda_k^n \subseteq \Lambda^n$ such 
    that $\lambda = \mu\nu$.
    Suppose $\mu' \in \Lambda^m$ and $\nu' \in \Lambda^n$ such that $\lambda 
    = \mu'\nu'$.
    Find $i$ and $j$ such that $\mu' \in \Lambda_i$ and $\nu' \in \Lambda_j$.
    Then, $\mu' \in \Lambda_i^m$ and $\nu' \in \Lambda_j^n$ satisfy $\lambda 
    = \mu'\nu'$.
    Let $l = \max\{i, j, k\}$ so that $\mu, \nu, \mu', \nu', \lambda \in 
    \Lambda_l$.
    Because the inclusion maps are degree-preserving, $\mu' \in \Lambda_l^m$ 
    and $\nu' \in \Lambda_l^n$, which satisfy $\lambda = \mu'\nu'$. 
    Thus, the unique factorisation property of $\d_l$ in $\Lambda_l$ implies 
    $\mu' = \mu$ and $\nu' = \nu$, as required.
\end{proof}

Recall $\Omega_{P, m}$ from \cref{ex:path_prototypes_finite}.
We say a sequence $(m_n) \subseteq P$ is \define{$\leq$-increasing} if $m_j 
\leq m_k$ whenever $j < k$. 
In \cref{prop:increasing_sequences_in_P_yield_nested_P_graphs}, we show each 
$\leq$-increasing sequence $(m_n)$ gives rise to a direct limit 
$\Omega_{P, (m_n)}$, the collection of which coincides with the collection of 
$\Omega_{k, m}$ for $m \in (\N \cup \{\infty\})^k$ in the sense of 
\cite{Yee07} when $(Q, P) = (\Z^k, \N^k)$ (see \cref{sec:k}).

\begin{proposition}
    \label{prop:increasing_sequences_in_P_yield_nested_P_graphs}
    Let $(Q, P)$ be a weakly quasi-lattice ordered group.
    Suppose $(m_n) \subseteq P$ is $\leq$-increasing.
    For all $n \in \N$, $\Omega_{P, m_n} \subseteq \Omega_{P, m_{n+1}}$ and 
    the inclusion map from $\Omega_{P, m_n}$ to $\Omega_{P, m_{n+1}}$ is a 
    $P$-graph morphism, and we denote the direct limit of $(\Omega_{P, m_n})$ 
    by $\Omega_{P, (m_n)}$.
\end{proposition}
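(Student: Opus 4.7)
The plan is to verify the inclusion of sets, then check that this inclusion respects the categorical structure and the degree functor, and finally invoke \cref{prop:union_of_nested_graphs} to obtain the direct limit.

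First I would fix $n \in \N$ and unpack the definitions from \cref{ex:path_prototypes_finite}. Given any $(p, q) \in \Omega_{P, m_n}$, by definition we have $p \leq q \leq m_n$. Since $(m_n)$ is $\leq$-increasing, $m_n \leq m_{n+1}$, and because $\leq$ is a partial order (hence transitive) on $P$ coming from the weakly quasi-lattice ordered group structure (see \cref{sec:WQLO}), we obtain $p \leq q \leq m_{n+1}$, giving $(p, q) \in \Omega_{P, m_{n+1}}$. Thus $\Omega_{P, m_n} \subseteq \Omega_{P, m_{n+1}}$ as sets.

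Next I would verify the inclusion is a functor. Composition in each $\Omega_{P, m_j}$ is given by the same formula $((p, q), (q, r)) \mapsto (p, r)$, and composability depends only on the middle coordinates matching; so a composable pair in $\Omega_{P, m_n}$ is composable in $\Omega_{P, m_{n+1}}$ with the same product. The unit space $\Omega_{P, m_n}^e = \{(p, p) : p \leq m_n\}$ is sent into $\Omega_{P, m_{n+1}}^e$ by the same formula, and range and source maps $(p, q) \mapsto (p, p)$ and $(p, q) \mapsto (q, q)$ are preserved. Since the degree is given by $\d(p, q) = p^{-1}q$ in both graphs, the inclusion is degree-preserving and hence a $P$-graph morphism.

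Having established these two properties, the hypotheses of \cref{prop:union_of_nested_graphs} are satisfied by the sequence $(\Omega_{P, m_n})_n$, and we may apply that proposition to conclude that the union $\Omega_{P, (m_n)} \coloneqq \bigcup_n \Omega_{P, m_n}$ is itself a $P$-graph, namely the direct limit. There is essentially no obstacle here: the statement is almost entirely a bookkeeping check that the algebraic operations and the degree functor are defined independently of which $m_n$ one views an element as sitting below. The only mildly subtle point is transitivity of $\leq$ combined with the left invariance recorded in \cref{sec:WQLO}, which ensures the set-theoretic inclusion genuinely holds.
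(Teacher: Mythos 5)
Your argument is correct and matches the paper's proof, which likewise notes that the set inclusion follows from transitivity of $\leq$ and that the inclusion maps are $P$-graph morphisms because the composable pairs, composition, units and degree maps of $\Omega_{P,m}$ do not depend on $m$, before invoking \cref{prop:union_of_nested_graphs}. (One tiny remark: your closing sentence invokes left invariance of $\leq$, but only transitivity is actually used for the inclusion, exactly as in your own second paragraph.)
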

\begin{proof}
    For all $n \in \N$, $\Omega_{P, m_n} \subseteq \Omega_{P, m_{n+1}}$ holds 
    because $\leq$ is transitive.
    The inclusion maps from $\Omega_{P, m_n}$ to $\Omega_{P, m_{n+1}}$ are 
    $P$-graph morphisms because the composable pairs, the composition map, 
    the resulting units and the degree maps in $P$-graphs of the form 
    $\Omega_{P, m}$ are all independent of the parameter $m$.
    Thus, the direct limit exists by \cref{prop:union_of_nested_graphs}.
\end{proof}

\begin{definition}[Path space of graph morphisms]
    \label{def:PSmorph}
    Let $(Q, P)$ be a weakly quasi-lattice ordered group, and let $\Lambda$ 
    be a finitely aligned $P$-graph. 
    We define $\PS{\M}{\Lambda}$ to be the set of graph morphisms $x \colon 
    \Omega_{P, (m_n)} \to \Lambda$, where $(m_n) \subseteq P$ is 
    $\leq$-increasing, and we write $\morphdom{x}$ for the domain of $x$ so 
    that $x \colon \morphdom{x} \to \Lambda$.
\end{definition}    

\begin{remark}
    \label{rem:extending_degree_map}
    When $\Lambda$ is a $k$-graph, the degree map $\d$ on $\Lambda$ can be 
    extended to $\PS{\ofYeend}{\Lambda}$ by letting $\d(x) = m \in (\N \cup 
    \{\infty\})^k$, where $x \colon \Omega_{k, m} \to \Lambda$.
    In our generalization, we can have $\Omega_{P, (l_n)} = 
    \Omega_{P, (m_n)}$ even if $(l_n) \neq (m_n)$, so it would not be 
    well-defined to let $\d(x) = (m_n)$, where $x \colon \Omega_{P, (m_n)} 
    \to \Lambda$.
    Instead, we extend $\d$ to $\PS{\M}{\Lambda}$ as follows.
    Given $\leq$-increasing sequences $(l_n), (m_n) \subseteq P$, define 
    $(l_n) \prec (m_n)$ if and only if, for all $j \in \N$, there is a $K \in 
    \N$ such that $l_j \leq m_K$ (in which case $l_j \leq m_k$ for all $k 
    \geq K$ because $(m_n) \subseteq P$ is $\leq$-increasing).
    Because the relation $\leq$ on $P$ is transitive, so is $\prec$.
    The relation $\prec$ is also reflexive (and hence a preorder), but 
    $\prec$ need not be antisymmetric (e.g. $(1, 3, 5, \dots)$ and 
    $(2, 4, 6, \dots)$ in $\N$).
    Because $\prec$ is a preorder, the relation $(l_n) \sim (m_n) \iff (l_n) 
    \prec (m_n) \prec (l_n)$ is an equivalence relation on the collection of 
    $\leq$-increasing sequences in $P$.
    For any $\leq$-increasing sequences $(l_n), (m_n) \subseteq P$, $(l_n) 
    \sim (m_n) \iff \Omega_{P, (l_n)} = \Omega_{P, (m_n)}$.
    Therefore, we extend $\d$ to $\PS{\M}{\Lambda}$ by defining $\d(x) = 
    [(m_n)]_\sim$, where $x \colon \Omega_{P, (m_n)} \to \Lambda$.
\end{remark}

Now we endow $\PS{\M}{\Lambda}$ with a topology. 
We write $e\morphdom{x} \coloneqq \setof{(p, q) \in \morphdom{x}}{p = e}$.
Given finite (possibly empty) $K_1, K_2 \subseteq \Lambda$ and $\mu \in 
\Lambda$, let
\begin{align*}
    \Fell{\PS{\M}{\Lambda}}{K_1}{K_2}
    &\coloneqq \setof{
        x \in \PS{\M}{\Lambda}
    }{
        K_1 \subseteq x(e\morphdom{x}) \subseteq \Lambda \setminus K_2
    }, \\
    \Fell{\PS{\M}{\Lambda}}{\mu}{K_2}
    &\coloneqq \Fell{\PS{\M}{\Lambda}}{\{\mu\}}{K_2}, \text{ and } \\
    \Fellin{\PS{\M}{\Lambda}}{\mu}
    &\coloneqq \Fell{\PS{\M}{\Lambda}}{\mu}{\emptyset}.
\end{align*}
The collection of such $\Fell{\PS{\M}{\Lambda}}{K_1}{K_2}$ is a basis for a 
topology on $\PS{\M}{\Lambda}$.
\label{basis_for_PSmorph}
Define
\[
    \PS{\M}{\Lambda} * P \coloneqq 
    \setof{
        (x, m) \in \PS{\M}{\Lambda} \times P
    }{
        (e, m) \in \morphdom{x}
    }.
\]
In light of \cref{rem:extending_degree_map}, it is worth demonstrating how 
containment $(x, p) \in \PS{\M}{\Lambda} * P$ is independent of the 
presentation of $\morphdom{x}$.

\begin{lemma}
    \label{lem:actionable_morphisms}
    Let $(Q, P)$ be a weakly quasi-lattice ordered group, and let $\Lambda$ 
    be a finitely aligned $P$-graph. 
    The following are equivalent:
    \begin{enumerate}
        \item \label{actionable_1} $(x, p) \in \PS{\M}{\Lambda} * P$;
        \item \label{actionable_2} for all $\leq$-increasing $(m_n)$, 
        $\morphdom{x} = \Omega_{P, (m_n)}$ implies $p \leq m_n$ eventually;
        \item \label{actionable_3} there is some $\leq$-increasing $(m_n)$ 
        with $\morphdom{x} = \Omega_{P, (m_n)}$ and $p \leq m_n$ eventually;
        \item \label{actionable_4} there is some $\leq$-increasing $(m_n)$ 
        with $\morphdom{x} = \Omega_{P, (m_n)}$ and $p \leq m_n$ for all $n$.
    \end{enumerate}
\end{lemma}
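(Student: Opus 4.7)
The plan is to unpack each condition to the elementary statement about the sequence $(m_n)$, and then use the preorder $\prec$ from \cref{rem:extending_degree_map} to move between different presentations of $\morphdom{x}$.

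First, I would record the basic observation that drives everything: for any $\leq$-increasing $(m_n) \subseteq P$, the pair $(e, p)$ lies in $\Omega_{P, (m_n)} = \bigcup_n \Omega_{P, m_n}$ if and only if $e \leq p \leq m_n$ for some $n$. Since $e \leq p$ holds automatically in $P$ and $(m_n)$ is $\leq$-increasing, $p \leq m_n$ for one $n$ forces $p \leq m_k$ for every $k$ at least that large. Hence ``$(e, p) \in \Omega_{P, (m_n)}$'', ``$p \leq m_n$ eventually'', and ``$p \leq m_n$ for some $n$'' are all equivalent whenever $(m_n)$ is $\leq$-increasing. The equivalence of \eqref{actionable_1} and \eqref{actionable_3} then follows at once from the definition of $\PS{\M}{\Lambda} * P$: any $x \in \PS{\M}{\Lambda}$ has $\morphdom{x} = \Omega_{P, (m_n)}$ for some $\leq$-increasing $(m_n)$, and $(e, p) \in \morphdom{x}$ is exactly the condition in \eqref{actionable_3}.

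Next I would handle \eqref{actionable_3} $\Leftrightarrow$ \eqref{actionable_4}. The forward direction is the only nontrivial one, and it uses the fact that passing to a tail of $(m_n)$ does not change $\Omega_{P, (m_n)}$: if $p \leq m_N$, then the shifted sequence $l_n \coloneqq m_{n+N}$ is still $\leq$-increasing, satisfies $p \leq l_n$ for every $n$, and gives $\Omega_{P, (l_n)} = \bigcup_n \Omega_{P, m_{n+N}} = \bigcup_n \Omega_{P, m_n} = \morphdom{x}$ because $\Omega_{P, m_n} \subseteq \Omega_{P, m_{n+N}}$. The reverse implication is trivial.

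The remaining work is \eqref{actionable_2} $\Leftrightarrow$ \eqref{actionable_3}; here \eqref{actionable_2} $\Rightarrow$ \eqref{actionable_3} is immediate by choosing a single witness sequence. For \eqref{actionable_3} $\Rightarrow$ \eqref{actionable_2}, suppose $(m_n)$ witnesses \eqref{actionable_3} and let $(l_n)$ be any $\leq$-increasing sequence with $\morphdom{x} = \Omega_{P, (l_n)}$. By \cref{rem:extending_degree_map}, the equality $\Omega_{P, (m_n)} = \Omega_{P, (l_n)}$ forces $(m_n) \sim (l_n)$; in particular $(m_n) \prec (l_n)$. Pick $j$ with $p \leq m_j$ and then use $(m_n) \prec (l_n)$ to produce $K$ with $m_j \leq l_K$; transitivity of $\leq$ gives $p \leq l_K$, and since $(l_n)$ is $\leq$-increasing we obtain $p \leq l_n$ for all $n \geq K$, which is \eqref{actionable_2}.

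The only real content is the step \eqref{actionable_3} $\Rightarrow$ \eqref{actionable_2}, which is also the main obstacle, since it is the one place where we need that $(e, p) \in \morphdom{x}$ is a property of the graph $\morphdom{x}$ rather than of any particular presenting sequence. Everything else is a direct unfolding of definitions together with the tail-invariance of $\Omega_{P, (m_n)}$.
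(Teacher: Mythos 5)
Your proof is correct and follows essentially the same route as the paper's: both are direct unpackings of the definition of $\Omega_{P,(m_n)}$ as $\bigcup_n \Omega_{P,m_n}$, with the paper running the cycle \eqref{actionable_1}$\Rightarrow$\eqref{actionable_2}$\Rightarrow$\eqref{actionable_3}$\Rightarrow$\eqref{actionable_4}$\Rightarrow$\eqref{actionable_1}. The only difference is that your step \eqref{actionable_3}$\Rightarrow$\eqref{actionable_2} detours through the preorder $\prec$ of \cref{rem:extending_degree_map}, whereas the paper obtains \eqref{actionable_2} directly from \eqref{actionable_1}: membership $(e,p)\in\morphdom{x}$ is a statement about the set $\morphdom{x}$ itself, so for \emph{any} presenting sequence it forces $(e,p)\in\Omega_{P,m_N}$ for some $N$, hence $p\leq m_n$ eventually --- your own ``basic observation'' already gives this and would let you drop the $\prec$ argument.
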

\begin{proof}
    Suppose \eqref{actionable_1} holds, so $(e, p) \in \morphdom{x}$, and
    suppose $(m_n)$ satisfies $\morphdom{x} = \Omega_{P, (m_n)}$.
    If $(m_n)$ is $\leq$-increasing, $p \leq m_n$ eventually, hence 
    \eqref{actionable_2} holds. 
    For every $x \in \PS{\M}{\Lambda}$, there is a $\leq$-increasing $(m_n)$ 
    satisfying $\morphdom{x} = \Omega_{P, (m_n)}$, so \eqref{actionable_2} 
    implies \eqref{actionable_3}. 
    Suppose \eqref{actionable_3} holds, so there is some $(m_n)$ and $N \in 
    \N$ such that $\morphdom{x} = \Omega_{P, (m_n)}$ and $p \leq m_n$ for all 
    $n \geq N$.
    Then, $(m_n)_{n \geq N}$ is satisfactory.
    If \eqref{actionable_4} holds, then $(e, p) \in \Omega_{P, m_1} \subseteq 
    \morphdom{x}$, and so $(x, p) \in \PS{\M}{\Lambda} * P$. 
    That is, \eqref{actionable_1} holds.
\end{proof}

\begin{lemma}
    \label{lem:action_on_PS_of_morphisms}
    Let $(Q, P)$ be a weakly quasi-lattice ordered group, and let $\Lambda$ 
    be a finitely aligned $P$-graph. 
    For each $(x, m) \in \PS{\M}{\Lambda} * P$, $\morphdom{x} = 
    \Omega_{P, (m_n)}$ for some $\leq$-increasing $(m_n) \subseteq P$ with $m 
    \leq m_n$ for all $n$.
    The map
    \[
        \rpa{x}{m} \colon \Omega_{P, (m^{-1}m_n)} \to \Lambda, \quad 
        (p, q) \mapsto (\rpa{x}{m})(p, q) \coloneqq x(mp, mq),
    \]
    is a graph morphism, and $(p, q) \in \morphdom{\rpa{x}{m}} \iff (mp, mq) 
    \in \morphdom{x}$.
\end{lemma}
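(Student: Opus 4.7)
The plan is to first secure a presentation of $\morphdom{x}$ for which the definition of $\rpa{x}{m}$ makes sense, then to verify the two claims about $\rpa{x}{m}$: the domain description and the graph-morphism property.

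First, I would invoke \cref{lem:actionable_morphisms}\eqref{actionable_4} to obtain a $\leq$-increasing sequence $(m_n)\subseteq P$ with $\morphdom{x}=\Omega_{P,(m_n)}$ and $m\leq m_n$ for every $n$. Because $m\leq m_n$, the element $m^{-1}m_n$ lies in $P$. Next I would check that $(m^{-1}m_n)$ is $\leq$-increasing: if $j<k$ then $m_j r=m_k$ for some $r\in P$, and multiplying on the left by $m^{-1}$ gives $(m^{-1}m_j)r=m^{-1}m_k$, so $m^{-1}m_j\leq m^{-1}m_k$. By \cref{prop:increasing_sequences_in_P_yield_nested_P_graphs}, $\Omega_{P,(m^{-1}m_n)}$ is then a well-defined $P$-graph, so the proposed codomain/domain of $\rpa{x}{m}$ is legitimate.

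Second, I would verify the equivalence $(p,q)\in\morphdom{\rpa{x}{m}}\iff(mp,mq)\in\morphdom{x}$ for $p,q\in P$. In the forward direction, $(p,q)\in\Omega_{P,(m^{-1}m_n)}$ means $p\leq q\leq m^{-1}m_n$ for some $n$; since $\leq$ is left invariant, multiplying through by $m$ yields $mp\leq mq\leq m_n$, hence $(mp,mq)\in\Omega_{P,(m_n)}=\morphdom{x}$. For the converse, $(mp,mq)\in\Omega_{P,(m_n)}$ gives $mp\leq mq\leq m_n$ for some $n$, and left cancellation in $P$ (which holds by the unique factorisation property) produces $p\leq q\leq m^{-1}m_n$ in $P$; crucially this step also needs $m\leq m_n$, which is why the choice of presentation from \cref{lem:actionable_morphisms}\eqref{actionable_4} matters. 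So $(p,q)\in\Omega_{P,(m^{-1}m_n)}=\morphdom{\rpa{x}{m}}$.

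Third, I would confirm that $\rpa{x}{m}$ is a $P$-graph morphism. Composable pairs in $\Omega_{P,(m^{-1}m_n)}$ have the form $((p,q),(q,r))$, and the previous paragraph shows the left-multiplied pair $((mp,mq),(mq,mr))$ is composable in $\Omega_{P,(m_n)}$; since $x$ is a functor, $(\rpa{x}{m})(p,q)(\rpa{x}{m})(q,r)=x(mp,mq)x(mq,mr)=x(mp,mr)=(\rpa{x}{m})(p,r)$, so composition is preserved. Degree preservation is immediate from the formula $\d((p,q))=p^{-1}q=(mp)^{-1}(mq)=\d((mp,mq))$ together with the fact that $x$ is degree-preserving. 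The main obstacle I anticipate is purely bookkeeping: ensuring that the sequence $(m^{-1}m_n)$ stays inside $P$, which forces me to use the stronger presentation from \cref{lem:actionable_morphisms}\eqref{actionable_4} rather than the more natural ``eventually'' version in \eqref{actionable_3}; once that choice is made, everything reduces to left invariance and left cancellation in the weakly quasi-lattice ordered group.
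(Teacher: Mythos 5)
Your proof is correct and follows essentially the same route as the paper: choose the presentation with $m \leq m_n$ for all $n$ via \cref{lem:actionable_morphisms}, use left invariance of $\leq$ to pass between $\Omega_{P,(m^{-1}m_n)}$ and $\Omega_{P,(m_n)}$, and then transport functoriality and degree preservation from $x$. One small quibble: left cancellation in $P$ follows from $P$ being a subsemigroup of the group $Q$, not from the unique factorisation property (which is a property of the $P$-graph $\Lambda$, not of the semigroup $P$).
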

\begin{proof}
    By \cref{lem:actionable_morphisms}, $\morphdom{x} = \Omega_{P, (m_n)}$ 
    for some $\leq$-increasing $(m_n) \subseteq P$ with $m \leq m_n$ for all 
    $n$.
    By the definition of $\leq$ and because $\leq$ is left invariant, 
    $(m^{-1}m_n)$ is a $\leq$-increasing subset of $P$, and hence has direct 
    limit $\Omega_{P, (m^{-1}m_n)}$ as per 
    \cref{prop:increasing_sequences_in_P_yield_nested_P_graphs}. 
    Because $m \leq m_n$ for all $n$, $(p, q) \in \Omega_{P, (m^{-1}m_n)} 
    \iff (mp, mq) \in \Omega_{P, (m_n)}$, which implies $\rpa{x}{m}$ is 
    well-defined.

    To see $\rpa{x}{m}$ is a functor, suppose $p \leq q \leq r \leq 
    m^{-1}m_n$ for some $n$ (i.e. $((p, q), (q, r))$ is a composable pair in 
    $\Omega_{P, (m^{-1}m_n)}$). 
    By left invariance, 
    \[
        mp \leq mq \leq mr \leq m_n,
    \] 
    so $((mp, mq), (mq, mr)) \in \composablepairs{\Omega_{P, (m_n)}}$. 
    Since $x$ is a functor, 
    \begin{align*}
        ((\rpa{x}{m})(p, q), (\rpa{x}{m})(q, r)) 
        &= (x(mp, mq), x(mq, mr)) \in \composablepairs{\Lambda} 
        \text{ and } \\
        (\rpa{x}{m})(p, q)(\rpa{x}{m})(q, r) 
        &= x(mp, mq)x(mq, mr) \\ 
        &= x((mp, mq)(mq, mr)) \\ 
        &= x(mp, mr) = (\rpa{x}{m})(p, r) \\
        &= (\rpa{x}{m})((p, q)(q, r)),
    \end{align*} 
    so $\rpa{x}{m}$ is a functor.
    Also, 
    \[
        \d((\rpa{x}{m})(p, q)) 
        = \d(x(mp, mq)) 
        = \d(mp, mq) 
        = (mp)^{-1}mq 
        = p^{-1}q 
        = \d(p, q),
    \] 
    so $\rpa{x}{m}$ is degree-preserving. 
    That is, $\rpa{x}{m}$ is a graph morphism.
\end{proof}

We define $T_{\PS{\M}{\Lambda}} \colon \PS{\M}{\Lambda} * P \to 
\PS{\M}{\Lambda}$ such that each $(x, m) \in \PS{\M}{\Lambda} * P$ maps to 
$\rpa{x}{m}$ as in \cref{lem:action_on_PS_of_morphisms}.

\begin{proposition}
    \label{prop:PSmorph_semigroup_action}
    Let $(Q, P)$ be a weakly quasi-lattice ordered group, and let $\Lambda$ 
    be a finitely aligned $P$-graph.
    Then, $(\PS{\M}{\Lambda}, P, T_{\PS{\M}{\Lambda}})$ is a semigroup 
    action.
\end{proposition}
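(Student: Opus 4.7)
The plan is to verify the two axioms \ref{S1} and \ref{S2} for semigroup actions directly from \cref{def:PSmorph}, \cref{lem:actionable_morphisms} and \cref{lem:action_on_PS_of_morphisms}. The main work is bookkeeping with the presentation $\morphdom{x} = \Omega_{P, (m_n)}$ and checking that the action of $mn \in P$ agrees with composing the actions of $m$ and then $n$.

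For \ref{S1}, fix $x \in \PS{\M}{\Lambda}$ and write $\morphdom{x} = \Omega_{P, (m_n)}$ for some $\leq$-increasing $(m_n) \subseteq P$. Since $e \leq m_n$ for all $n$, \cref{lem:actionable_morphisms}\eqref{actionable_4} gives $(x, e) \in \PS{\M}{\Lambda} * P$. By \cref{lem:action_on_PS_of_morphisms}, $\rpa{x}{e}$ has domain $\Omega_{P, (e^{-1}m_n)} = \Omega_{P, (m_n)} = \morphdom{x}$, and $(\rpa{x}{e})(p, q) = x(ep, eq) = x(p, q)$, so $\rpa{x}{e} = x$.

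For \ref{S2}, fix $(x, m, n) \in \PS{\M}{\Lambda} \times P \times P$ and choose $\leq$-increasing $(m_j) \subseteq P$ with $\morphdom{x} = \Omega_{P, (m_j)}$. Assume first that $(x, mn) \in \PS{\M}{\Lambda} * P$. By \cref{lem:actionable_morphisms}, we may assume $mn \leq m_j$ for all $j$, so also $m \leq m_j$ for all $j$, giving $(x, m) \in \PS{\M}{\Lambda} * P$. Then $\morphdom{\rpa{x}{m}} = \Omega_{P, (m^{-1}m_j)}$ by \cref{lem:action_on_PS_of_morphisms}, and left invariance of $\leq$ yields $n \leq m^{-1}m_j$ for all $j$, so $(\rpa{x}{m}, n) \in \PS{\M}{\Lambda} * P$. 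Conversely, suppose $(x, m) \in \PS{\M}{\Lambda} * P$ and $(\rpa{x}{m}, n) \in \PS{\M}{\Lambda} * P$. After passing to a tail of $(m_j)$ via \cref{lem:actionable_morphisms}, we may assume $m \leq m_j$ for all $j$, so $\morphdom{\rpa{x}{m}} = \Omega_{P, (m^{-1}m_j)}$, and then passing to a further tail gives $n \leq m^{-1}m_j$ for all $j$. Left invariance then gives $mn \leq m_j$ for all $j$, so $(x, mn) \in \PS{\M}{\Lambda} * P$ by \cref{lem:actionable_morphisms}\eqref{actionable_4}.

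In this case, $\morphdom{\rpa{(\rpa{x}{m})}{n}} = \Omega_{P, (n^{-1}m^{-1}m_j)} = \Omega_{P, ((mn)^{-1}m_j)} = \morphdom{\rpa{x}{mn}}$ by \cref{lem:action_on_PS_of_morphisms} applied twice, and for any $(p, q)$ in this common domain,
\[
    (\rpa{(\rpa{x}{m})}{n})(p, q)
    = (\rpa{x}{m})(np, nq)
    = x(mnp, mnq)
    = (\rpa{x}{mn})(p, q),
\]
establishing $\rpa{(\rpa{x}{m})}{n} = \rpa{x}{mn}$. The only subtlety is that the presentation $\morphdom{x} = \Omega_{P, (m_n)}$ is not unique, so \ref{S2} requires knowing that the relevant domain containments can be detected in any (equivalently, some) presentation; this is exactly the content of \cref{lem:actionable_morphisms} and is where the bulk of the care is needed.
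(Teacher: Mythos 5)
Your proof is correct and follows essentially the same route as the paper's: both verify \ref{S1} directly and establish \ref{S2} by using \cref{lem:actionable_morphisms} to pass to a presentation $\morphdom{x} = \Omega_{P,(m_j)}$ in which the relevant degree bounds hold for all $j$, then transferring those bounds via transitivity and left invariance of $\leq$. The only cosmetic difference is that you identify the two iterated domains through the explicit presentations $\Omega_{P,((mn)^{-1}m_j)}$, whereas the paper uses the presentation-free characterisation $(mp,mq)\in\morphdom{x}\iff(p,q)\in\morphdom{\rpa{x}{m}}$ from \cref{lem:action_on_PS_of_morphisms}.
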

\begin{proof}
    We check the axioms \ref{S1} and \ref{S2} hold.
    For all $x \in \PS{\M}{\Lambda}$, $(e, e) \in \morphdom{x}$, so $(x, e) 
    \in \PS{\M}{\Lambda} * P$, and $(\rpa{x}{e})(p, q) = x(ep, eq) = 
    x(p, q)$, for all $(p, q) \in \morphdom{x}$. 
    That is, $\rpa{x}{e} = x$, so \ref{S1} holds.
    Suppose $(x, pq) \in \PS{\M}{\Lambda} * P$. 
    By \cref{lem:actionable_morphisms}, there is some $\leq$-increasing 
    $(m_n)$ satisfying $\morphdom{x} = \Omega_{P, (m_n)}$ and $pq \leq m_n$ 
    for all $n$.
    Since $\leq$ is transitive, $p \leq m_n$ for all $n$, so $(x, p) \in 
    \PS{\M}{\Lambda} * P$.
    By left invariance of $\leq$, $q \leq p^{-1}m_n$ for all $n$, and 
    $\morphdom{\rpa{x}{p}} = \Omega_{P, (p^{-1}m_n)}$, so $(\rpa{x}{p}, q) 
    \in \PS{\M}{\Lambda} * P$.
    Now suppose $(x, p), (\rpa{x}{p}, q) \in \PS{\M}{\Lambda} * P$.
    We need to show $(x, pq) \in \PS{\M}{\Lambda} * P$.
    By \cref{lem:actionable_morphisms}, there is some $\leq$-increasing 
    $(m_n)$ satisfying $\morphdom{x} = \Omega_{P, (m_n)}$ and $p \leq m_n$ 
    for all $n$.
    Then, $\morphdom{\rpa{x}{p}} = \Omega_{P, (p^{-1}m_n)}$.
    Since $(\rpa{x}{p}, q) \in \PS{\M}{\Lambda} * P$, we have that $(e, q) 
    \in \Omega_{P, (p^{-1}m_n)}$, so $q \leq p^{-1}m_n$ for some $n$. 
    By left invariance of $\leq$, we have $pq \leq m_n$, and so $(e, pq) \in 
    \Omega_{P, m_n} \subseteq \morphdom{x}$. 
    That is, $(x, pq) \in \PS{\M}{\Lambda} * P$.
    In the above setting, we need to show $\rpa{x}{(pq)} = 
    \rpa{(\rpa{x}{p})}{q}$.
    Recall from \cref{lem:action_on_PS_of_morphisms} that $(mp, mq) \in 
    \morphdom{x} \iff (p, q) \in \morphdom{\rpa{x}{m}}$, so 
    \begin{align*}
        (j, k)& \in \morphdom{\rpa{x}{(pq)}} \\
        \iff &(pqj, pqk) \in \morphdom{x} \quad \text{(because $(x, pq) \in 
        \PS{\M}{\Lambda} * P$)} \\
        \iff &(qj, qk) \in \morphdom{\rpa{x}{p}} \quad \text{(because $(x, p) 
        \in \PS{\M}{\Lambda} * P$)} \\
        \iff &(j, k) \in \morphdom{\rpa{(\rpa{x}{p})}{q}} \quad 
        \text{(because $(\rpa{x}{p}, q) \in \PS{\M}{\Lambda} * P$)},
    \end{align*}
    so $\morphdom{\rpa{x}{(pq)}} = \morphdom{\rpa{(\rpa{x}{p})}{q}}$. 
    Moreover, for any $(j, k) \in \morphdom{\rpa{x}{(pq)}} = 
    \morphdom{\rpa{(\rpa{x}{p})}{q}}$, 
    \[
        (\rpa{x}{(pq)})(j, k)
        = x(pqj, pqk)
        = (\rpa{x}{p})(qj, qk)
        = (\rpa{(\rpa{x}{p})}{q})(j, k).
    \]
    Therefore, \ref{S2} holds.
\end{proof}

We will prove in \cref{thm:conjugacy} that the action on $\PS{\M}{\Lambda}$ 
is locally compact and directed, hence admits the following semidirect 
product groupoid $\PG{\M}{\Lambda}$, which coincides with 
$\PG{\ofYeend}{\Lambda}$ when $\Lambda$ is a finitely aligned discrete 
$k$-graph (see \cref{sec:k}).

\begin{definition}[Path groupoid of graph morphisms]
    \label{def:PGmorph}
    Let $(Q, P)$ be a weakly quasi-lattice ordered group, and let $\Lambda$ 
    be a finitely aligned $P$-graph.
    We write $\PG{\M}{\Lambda}$ for the semidirect product groupoid of 
    $(\PS{\M}{\Lambda}, P, T_{\PS{\M}{\Lambda}})$.
\end{definition}

\subsubsection{Boundary-path groupoid}

We use exhaustive subsets of $\Lambda$ (see \cref{def:exhaustive}) to define 
a boundary-path space of $\Lambda$ consisting of graph morphisms.

\begin{definition}[Boundary-path space of graph morphisms]
    \label{def:BPSmorph}
    Let $(Q, P)$ be a weakly quasi-lattice ordered group, and let $\Lambda$ 
    be a finitely aligned $P$-graph. 
    We define $\BPS{\M}{\Lambda}$ to be set of $x \in \PS{\M}{\Lambda}$ such 
    that, if $(e, m) \in \morphdom{x}$ and $E \subseteq \Lambda$ is finite, 
    exhaustive and $\s(x(e, m)) \in \r(E)$, then there is a $\nu \in E$ such 
    that $x(m, m\d(\nu)) = \nu$.
\end{definition}

We will prove in \cref{cor:isom} that $\BPS{\M}{\Lambda}$ is closed and 
invariant so that it makes sense to define the following reduction of 
$\PG{\M}{\Lambda}$.

\begin{definition}[Boundary-path groupoid of graph morphisms]
    \label{def:BPGmorph}
    Let $(Q, P)$ be a weakly quasi-lattice ordered group, and let $\Lambda$ 
    be a finitely aligned $P$-graph.
    We write $\BPG{\M}{\Lambda}$ for the reduction of $\PG{\M}{\Lambda}$ to 
    the closed invariant subset $\BPS{\M}{\Lambda}$ of $\PS{\M}{\Lambda}$.
\end{definition}

\section{Conjugacy of filters and graph morphisms}    
    \label{sec:conjugacy_of_filters_and_morphisms}

It was observed in \cite[Remarks 2.2]{KP00} that each graph morphism $x$ from 
$\Omega_{k}$ to a row-finite $k$-graph with no sources is completely 
determined by the set $\setof{x(0, m)}{m \in \N^k}$. 
Given any finitely aligned $P$-graph $\Lambda$, we show in 
\cref{thm:conjugacy} that this association between graph morphisms and 
subsets of $\Lambda$ forms a conjugacy between the semigroup actions on 
$\PS{\M}{\Lambda}$ and $\PS{\F}{\Lambda}$ from the previous section. 
We note that for finitely aligned $k$-graphs, graph morphisms have been 
related to certain $\preceq$-increasing sequences, called $N$-paths, in
\cite[Page 2763]{PW05}; and for semigroupoids, graph morphism-like objects 
have been related to filter-like objects in the semigroupoid 
\cite[Proposition 19.11]{Exe08}. 
However, as far as we are aware, it has not been made explicit how such 
identifications give rise to isomorphic groupoids, nor has this been 
generalised to the setting of finitely aligned $P$-graphs. 
We first need a lemma that shows all filters are unions of principal filters. 

\begin{lemma}
    \label{lem:filters_are_unions_of_principal_filters}
    Let $(Q, P)$ be a weakly quasi-lattice ordered group, and let $\Lambda$ 
    be a finitely aligned $P$-graph.
    For any $y \in \PS{\F}{\Lambda}$, there is a $\preceq$-increasing 
    $(\mu_n) \subseteq y$ such that $y = \bigcup_{n}\principal{\mu_n}$. 
    The sequence $(\d(\mu_n))$ is $\leq$-increasing, and the direct limit 
    $\Omega_{P, (\d(\mu_n))}$ is independent of $(\mu_n)$.
\end{lemma}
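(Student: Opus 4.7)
The plan is to exploit countability of $\Lambda$ together with directedness of $y$. Since $\Lambda$ is countable by the definition of a $P$-graph, so is any filter $y$. I would first enumerate $y = \{\lambda_1, \lambda_2, \ldots\}$, set $\mu_1 \coloneqq \lambda_1$, and then recursively apply directedness to the pair $\mu_{n-1}, \lambda_n \in y$ to pick $\mu_n \in y$ with $\mu_{n-1} \preceq \mu_n$ and $\lambda_n \preceq \mu_n$. The resulting sequence is $\preceq$-increasing by construction, and the identity $y = \bigcup_n \principal{\mu_n}$ follows from two routine observations: each $\lambda_n$ lies in $\principal{\mu_n}$ by construction, while the reverse inclusion is hereditariness of $y$ applied to each $\mu_n \in y$.

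Next, I would check that $(\d(\mu_n))$ is $\leq$-increasing. The relation $\mu_n \preceq \mu_{n+1}$ unpacks to $\mu_n\nu = \mu_{n+1}$ for some $\nu \in \Lambda$, and functoriality of $\d$ then gives $\d(\mu_{n+1}) = \d(\mu_n)\d(\nu)$, which is exactly $\d(\mu_n) \leq \d(\mu_{n+1})$ in the order on $P$. \Cref{prop:increasing_sequences_in_P_yield_nested_P_graphs} then produces the direct limit $\Omega_{P, (\d(\mu_n))}$.

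For independence of the direct limit, I would invoke the characterisation from \cref{rem:extending_degree_map}, namely $\Omega_{P, (m_n)} = \Omega_{P, (l_n)}$ if and only if $(m_n) \sim (l_n)$, meaning $(m_n) \prec (l_n) \prec (m_n)$. Given two sequences $(\mu_n), (\mu'_n) \subseteq y$ both satisfying the conclusion of the lemma, set $m_n \coloneqq \d(\mu_n)$ and $l_n \coloneqq \d(\mu'_n)$; the task reduces to verifying $(m_n) \prec (l_n)$ and the symmetric inequality. Fix $j \in \N$: since $\mu_j \in y = \bigcup_k \principal{\mu'_k}$, there is some $K$ with $\mu_j \preceq \mu'_K$, and the degree computation above yields $m_j \leq l_K$. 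The reverse comparison is identical.

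The main obstacle is nothing deeper than bookkeeping: I must ensure that the inductive construction actually absorbs every element of $y$ (which requires countability of $\Lambda$), and recognise that the preorder $\prec$ from \cref{rem:extending_degree_map} is precisely the calculus needed to compare two such sequences. Once those are in place, every remaining step is a direct unpacking of the definitions of filter, $\preceq$, $\leq$, and the degree functor.
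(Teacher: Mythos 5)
Your proposal is correct and follows essentially the same route as the paper's proof: enumerate $y$ using countability, inductively absorb each element via directedness to build the $\preceq$-increasing sequence, deduce $\leq$-monotonicity of the degrees from functoriality of $\d$, and settle independence of the direct limit by verifying $(\d(\mu_n)) \prec (\d(\mu_n')) \prec (\d(\mu_n))$ exactly as in \cref{rem:extending_degree_map}. No gaps.
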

\begin{proof}
    Since $\Lambda$ is countable, we can enumerate the elements of $y$ and  
    write $y = \{\mu_1', \mu_2', \dots\}$. 
    We build a sequence $(\mu_n)$ inductively. 
    Let $\mu_1 \coloneqq \mu_1'$. 
    Suppose for some $k \geq 1$ there is a $\mu_k \in y$ such that $j \leq k$ 
    implies $\mu_j' \preceq \mu_k$. 
    Then, $\mu_{k+1}', \mu_k \in y$, and $y$ is directed, so we can define  
    $\mu_{k+1} \in y$ such that $\mu_{k+1}', \mu_k \preceq \mu_{k+1}$. 
    By transitivity of $\preceq$, $j \leq k+1$ implies $\mu_j' \preceq 
    \mu_{k+1}$. 
    Therefore, by induction, there is a $\preceq$-increasing $(\mu_n) 
    \subseteq y$ such that $j \leq n$ implies $\mu_j' \preceq \mu_n$ and 
    $(\mu_n)$ is $\preceq$-increasing. 
    Now fix $\mu \in y$, so $\mu = \mu_j' \preceq \mu_j$ for some $j$. 
    Thus, $y \subseteq \bigcup_{n}\principal{\mu_n}$. 
    Because $y$ is hereditary, we also have $\bigcup_{n}\principal{\mu_n} 
    \subseteq y$.
    
    Because $\d$ is a functor, for any $\preceq$-increasing $(\mu_n) 
    \subseteq \Lambda$, $(\d(\mu_n))$ is $\leq$-increasing and hence has a 
    direct limit $\Omega_{P, (\d(\mu_n))}$. 
    Suppose $(\mu_n), (\mu_n') \subseteq y$ are both $\preceq$-increasing 
    such that $y = \bigcup_{n}\principal{\mu_n} = \bigcup_{n} 
    \principal{\mu_n'}$.
    We claim $\Omega_{P, (\d(\mu_n))} = \Omega_{P, (\d(\mu_n'))}$. 
    To prove the claim, recall from \cref{rem:extending_degree_map} that it 
    is equivalent to show $(\d(\mu_n)) \prec (\d(\mu_n')) \prec (\d(\mu_n))$.
    For each $j \in \N$, $\mu_j \in \principal{\mu_j} \subseteq \bigcup_{n}
    \principal{\mu_n'}$, so there is a $K \in \N$ such that $\mu_j \preceq 
    \mu_K'$.
    Because $\d$ is a functor, $\d(\mu_j) \leq \d(\mu_K')$.
    Hence, $(\d(\mu_n)) \prec (\d(\mu_n'))$.
    A symmetric argument shows $(\d(\mu_n')) \prec (\d(\mu_n))$, and so 
    $\Omega_{P, (\d(\mu_n))} = \Omega_{P, (\d(\mu_n'))}$, as required.
\end{proof}

\begin{theorem}
    \label{thm:conjugacy}
    Let $(Q, P)$ be a weakly quasi-lattice ordered group, and let $\Lambda$ 
    be a finitely aligned $P$-graph.
    The map 
    \[
        h \colon \PS{\M}{\Lambda} \to \PS{\F}{\Lambda}, \quad 
        x \mapsto h(x) \coloneqq x(e\morphdom{x}),
    \]
    is a conjugacy. 
\end{theorem}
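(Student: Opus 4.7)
The plan is to verify the three conjugacy axioms \ref{C1}, \ref{C2}, \ref{C3} in turn, after first checking that $h$ is well-defined.

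\textbf{Well-definedness.} I would first check that $h(x) = x(e\morphdom{x})$ really is a filter. It is nonempty since it contains $x(e,e)$. For heredity, if $\lambda \preceq x(e,m)$, write $x(e,m) = \lambda\nu$; since $\Lambda$ has the unique factorisation property and $x$ is a functor, the decomposition $(e,m) = (e,\d(\lambda))(\d(\lambda),m)$ forces $\lambda = x(e,\d(\lambda)) \in h(x)$. For directedness, if $x(e,m), x(e,n) \in h(x)$ with $\morphdom{x} = \Omega_{P,(m_n)}$, eventually $m, n \le m_K$, and then $x(e,m_K) \in h(x)$ is a common $\preceq$-upper bound via functoriality.

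\textbf{Bijectivity (\ref{C1}, first part).} The plan is to use \cref{lem:filters_are_unions_of_principal_filters} in both directions. For surjectivity, given $y \in \PS{\F}{\Lambda}$ pick a $\preceq$-increasing sequence $(\mu_n) \subseteq y$ with $y = \bigcup_n \principal{\mu_n}$; set $m_n = \d(\mu_n)$ and define $x \colon \Omega_{P,(m_n)} \to \Lambda$ by letting $x(p,q)$ be the unique middle factor $\beta$ in the decomposition $\mu_n = \alpha\beta\gamma$ with $\d(\alpha)=p$, $\d(\beta)=p^{-1}q$, $\d(\gamma)=q^{-1}m_n$ (for any $n$ with $q \le m_n$). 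Unique factorisation in $\Lambda$ shows this is independent of the chosen $n$, and a direct check gives functoriality and degree-preservation. Then $\mu_n = x(e,m_n) \in h(x)$ and heredity gives $y = h(x)$. For injectivity, if $h(x) = h(y)$ with $\morphdom{x} = \Omega_{P,(l_n)}$, then $(x(e,l_n))$ witnesses that $h(x) = \bigcup_n \principal{x(e,l_n)}$, and similarly for $y$; \cref{lem:filters_are_unions_of_principal_filters} forces $\morphdom{x} = \morphdom{y}$. Since $h(x) = h(y)$ is a filter and $\d$ is injective on filters, $x(e,p) = y(e,p)$ for every $(e,p)$; then $x(p,q)$ and $y(p,q)$ coincide by left cancellation applied to $x(e,q) = x(e,p)\,x(p,q) = y(e,p)\,y(p,q)$.

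\textbf{Homeomorphism (\ref{C1}, second part).} Comparing the two bases defined on pages describing $\PS{\F}{\Lambda}$ and $\PS{\M}{\Lambda}$, the identity
\[
    h^{-1}\bigl(\Fell{\PS{\F}{\Lambda}}{K_1}{K_2}\bigr) = \Fell{\PS{\M}{\Lambda}}{K_1}{K_2}
\]
is immediate from the definition of $h$, and combined with bijectivity it also gives $h\bigl(\Fell{\PS{\M}{\Lambda}}{K_1}{K_2}\bigr) = \Fell{\PS{\F}{\Lambda}}{K_1}{K_2}$, so $h$ is both continuous and open.

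\textbf{Compatibility with the actions (\ref{C2}, \ref{C3}).} For \ref{C2}, $(x,m) \in \PS{\M}{\Lambda} * P$ iff $(e,m) \in \morphdom{x}$, in which case $x(e,m) \in h(x) \cap \Lambda^m$ so $h(x) \in \rpadom{\PS{\F}{\Lambda}}{m}$. Conversely, given $y \in \rpadom{\PS{\F}{\Lambda}}{m}$ with $\mu \in y \cap \Lambda^m$, enumerate $y$ starting with $\mu$ when applying \cref{lem:filters_are_unions_of_principal_filters}; the resulting $x$ with $h(x) = y$ then satisfies $(e,m) \in \morphdom{x}$. For \ref{C3}, using \cref{lem:action_on_PS_of_morphisms},
\[
    h(\rpa{x}{m}) = \{x(m,mq) : (m,mq) \in \morphdom{x}\},
\]
while $\rpa{h(x)}{m} = \PSfiltershiftoff{x(e,m)}{h(x)} = \{\nu : x(e,m)\nu \in h(x)\}$, which by functoriality and left cancellation is exactly the same set.

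The main obstacle I expect is the bijectivity step: one must carefully distinguish between the graph morphism $x$ (which carries extra combinatorial data from its $\leq$-increasing domain) and the filter $h(x)$ (which is merely a subset), and the two checks rely essentially on unique factorisation in $\Lambda$ together with the fact that $\d$ is injective on filters. Once bijectivity is established, everything else is formal.
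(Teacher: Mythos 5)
Your proposal is correct and follows essentially the same route as the paper's proof: verify $h(x)$ is a filter, obtain surjectivity from \cref{lem:filters_are_unions_of_principal_filters} together with the unique factorisation property (your ``middle factor'' $\beta$ is the paper's $\iota$), get injectivity from degree-preservation and injectivity of $\d$ on filters, match the two bases for the homeomorphism, and check \ref{C2} and \ref{C3} directly. The only deviations are cosmetic (e.g.\ the paper proves $\morphdom{x} = \morphdom{y}$ and the converse of \ref{C2} by direct degree arguments rather than re-invoking the lemma), so no further comparison is needed.
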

\begin{proof}
    We first show $h(x) \in \PS{\F}{\Lambda}$.
    Observe $(e, e) \in e\morphdom{x}$, so $h(x) \neq \emptyset$.
    Fix $x(e, p), x(e, q) \in h(x)$, where $(e, p), (e, q) \in \morphdom{x}$.
    By definition of $\PS{\M}{\Lambda} * P$, we have $(x, p), (x, q) \in 
    \PS{\M}{\Lambda} * P$, and the characterisation in 
    \cref{lem:actionable_morphisms} of $\PS{\M}{\Lambda} * P$ implies $p, q 
    \leq m_n$ eventually, where $\morphdom{x} = \Omega_{P, (m_n)}$. 
    Thus, there is an $n \in \N$ such that $p, q \leq m_n$. 
    Because $x$ is a functor, $x(e, p), x(e, q) \preceq x(e, m_n)$, so $h(x)$ 
    is directed. 
    Fix $x(e, r) \in h(x)$, and suppose $\mu\nu = x(e, r)$. 
    Using that $x$ is a functor and the unique factorisation property, it is 
    straightforward to check that $\mu = x(e, \d(\mu)) \in h(x)$, so $h(x)$ 
    is hereditary.
    Now we show \ref{C1} holds, i.e. $h$ is a homeomorphism.
    Suppose $h(x) = h(y)$.
    Then, $x(e\morphdom{x}) = y(e\morphdom{y})$.
    We show $\morphdom{x} = \morphdom{y}$.
    If $(j, k) \in \morphdom{x}$, $(e, k) \in e\morphdom{x}$, so $x(e, k) \in 
    y(e\morphdom{y})$. 
    There is a $k' \in P$ such that $x(e, k) = y(e, k')$, so $k = k'$ because 
    $x$ and $y$ are degree-preserving. 
    Hence, $(e, k) \in \morphdom{y}$, and so $(j, k) \in \morphdom{y}$. 
    A symmetric argument shows the other inclusion.
    Because $x$ is a functor, $x(e, j)x(j, k) = y(e, j)y(j, k)$, and the 
    unique factorisation property implies $x(j, k) = y(j, k)$ because $x$ is 
    degree-preserving. 
    Thus, $x = y$, and so $h$ is injective.
    Let $y \in \PS{\F}{\Lambda}$.
    By \cref{lem:filters_are_unions_of_principal_filters}, there is a 
    $\preceq$-increasing $(\mu_n) \subseteq y$ such that $y = \bigcup_{n}
    \principal{\mu_n}$ and $(\d(\mu_n))$ is $\leq$-increasing so that the 
    direct limit $\Omega_{P, (\d(\mu_n))}$ exists.
    Fix $(j, k) \in \Omega_{P, (\d(\mu_n))}$, so $k \leq \d(\mu_n)$ for 
    some $n$.
    By the unique factorisation property, there are unique $\kappa \in 
    \Lambda^k$ and $\lambda \in \Lambda^{k^{-1}\d(\mu_n)}$ such that 
    $\kappa\lambda = \mu_n \in y$. 
    Because $y$ is hereditary, $\kappa \in y$.
    Notice that, if $(\mu_n')$ is another sequence as in 
    \cref{lem:filters_are_unions_of_principal_filters} so that 
    $\Omega_{P, (\d(\mu_n))} = \Omega_{P, (\d(\mu_n'))}$, then the above 
    argument yields a $\kappa' \in \Lambda^k \cap y$ and so $\kappa = 
    \kappa'$ by the injectivity of $\d|_y$. 
    That is, $\kappa$ does not depend on the choice of $(\mu_n)$. 
    The unique factorisation property implies there are unique $\theta \in 
    \Lambda^j$ and $\iota \in \Lambda^{j^{-1}k}$ such that $\kappa = 
    \theta\iota$. 
    In this way, $(j, k)$ uniquely determines $\iota \in \Lambda$, so we can 
    define
    \[
        x \colon \Omega_{P, (\d(\mu_n))} \to \Lambda, \quad 
        (j, k) \mapsto x(j, k) \coloneqq \iota.
    \]
    We show $x(e\morphdom{x}) = y$.
    Fix $(e, k) \in e\morphdom{x}$.
    In the above setting, where $x(e, k)$ is defined, we have $\theta \in 
    \Lambda^e$, so $\theta = \r(\iota)$. 
    Thus, $x(e, k) = \iota = \r(\iota)\iota = \kappa \in y$. 
    Now fix $\kappa' \in y$. 
    Then, $\kappa' \preceq \mu_n$ for some $n$, which implies $\d(\kappa') 
    \leq \d(\mu_n)$, and so $(e, \d(\kappa')) \in e\Omega_{P, (\d(\mu_n))} = 
    e\morphdom{x}$. 
    We need to show $x(e, \d(\kappa')) = \kappa'$. 
    By definition of $x(e, \d(\kappa'))$, $\kappa \in \Lambda^{\d(\kappa')}$, 
    so $\d(\kappa) = \d(\kappa')$ and $\kappa, \kappa' \in y$. 
    By the injectivity of $\d|_y$, we have that $x(e, \d(\kappa')) = \kappa = 
    \kappa'$, as required.
    Therefore, $x(e\morphdom{x}) = y$.
    We show $x$ is a graph morphism.
    Fix $((i, j), (j, k)) \in \composablepairs{\Omega_{P, (\d(\mu_n))}}$.
    As per the definition of $x$, there are unique $\kappa_{j, k} \in y \cap
    \Lambda^k$, $\theta_{j, k} \in y \cap \Lambda^j$ and $\iota_{j, k} \in 
    \Lambda^{j^{-1}k}$ such that $\kappa_{j, k} = \theta_{j, k}\iota_{j, k}$.
    Similarly, there are unique $\kappa_{i, j} \in y \cap \Lambda^j$, 
    $\theta_{i, j} \in y \cap \Lambda^i$ and $\iota_{i, j} \in 
    \Lambda^{i^{-1}j}$ such that $\kappa_{i, j} = \theta_{i, j}\iota_{i, j}$. 
    By the injectivity of $\d_y$, $\theta_{j, k} = \kappa_{i, j}$. 
    Hence, $\theta_{j, k} = \theta_{i, j}\iota_{i, j}$, so $\s(\iota_{i, j}) 
    = \s(\theta_{j, k}) = \r(\iota_{j, k})$. 
    That is, $(x(i, j), x(j, k)) \in \composablepairs{\Lambda}$.
    We need to show $x(i, k) = x(i, j)x(j, k)$. 
    Notice $\theta_{i, k}, \theta_{i, j} \in y \cap \Lambda^i$, so 
    $\theta_{i, k} = \theta_{i, j}$ by the injectivity of $\d|_y$. 
    Because of left cancellation, it suffices to show 
    $\theta_{i, k}\iota_{i, k} = \theta_{i, j}\iota_{i, j}\iota_{j, k}$, 
    equivalently $\kappa_{i, k} = \kappa_{i, j}\iota_{j, k}$. 
    Observe 
    \[
        \d(\kappa_{i, k}) 
        = k 
        = j(j^{-1}k) 
        = \d(\kappa_{i, j})\d(\iota_{j, k}) 
        = \d(\kappa_{i, j}\iota_{j, k}),
    \]
    and so it is enough to show $\kappa_{i, k}, \kappa_{i, j}\iota_{j, k} \in 
    y$ because of the injectivity of $\d_y$. 
    We have $\kappa_{i, k} \in y$ by construction of $\kappa_{i, k}$. 
    Since $\theta_{j, k} = \theta_{i, j}\iota_{i, j}$, we have
    \[
        \kappa_{i, j}\iota_{j, k}
        = \theta_{i, j}\iota_{i, j}\iota_{j, k} 
        = \theta_{j, k}\iota_{j, k} 
        = \kappa_{j, k}
        \in y,
    \]
    so both $\kappa_{i, k}, \kappa_{i, j}\iota_{j, k} \in y$, as required. 
    That is, $x(i, k) = x(i, j)x(j, k)$, and so $x$ is a functor.
    Moreover, $x$ is degree-preserving by construction since $\iota_{j, k} 
    \in \Lambda^{j^{-1}k}$. 
    Therefore, $x \in \PS{\M}{\Lambda}$, and so $h$ is surjective.

    Recall the basis for the topology on $\PS{\M}{\Lambda}$ from 
    \cref{basis_for_PSmorph}.
    Notice the bijection $h$ identifies each basic open set 
    $\Fell{\PS{\F}{\Lambda}}{K_1}{K_2}$ of $\PS{\F}{\Lambda}$ with 
    $\Fell{\PS{\M}{\Lambda}}{K_1}{K_2}$. 
    Thus, $h$ is a homeomorphism.
    
    Now we show \ref{C2} holds, i.e. for each $m \in P$, 
    $h(\rpadom{\PS{\M}{\Lambda}}{m}) = \rpadom{\PS{\F}{\Lambda}}{m}$.
    Fix $x \in \rpadom{\PS{\M}{\Lambda}}{m}$. 
    By definition of the action of $P$ on $\PS{\M}{\Lambda}$, $(e, m) \in 
    e\morphdom{x}$, and so $x(e, m) \in h(x)$. 
    Because $x$ is a graph morphism, $\d(x(e, m)) = m$. 
    Hence, $h(x) \in \rpadom{\PS{\F}{\Lambda}}{m}$.
    For the other inclusion, if $h(x) \in \rpadom{\PS{\F}{\Lambda}}{m}$, then 
    there is a $\mu \in h(x) \cap \Lambda^m$. 
    By definition of $h$, $\mu = x(e, m')$ for some $(e, m') \in 
    e\morphdom{x}$. 
    Notice $m = \d(\mu) = \d(x(e, m')) = m'$ because $x$ is 
    degree-preserving.
    Thus, $(e, m) \in e\morphdom{x}$, which means $x \in 
    \rpadom{\PS{\M}{\Lambda}}{m}$, so $h(x) \in 
    h(\rpadom{\PS{\M}{\Lambda}}{m})$.
    
    Lastly, we show \ref{C3} holds, i.e. $h(\rpa{x}{m}) = \rpa{h(x)}{m}$, for 
    all $m \in P$ and $x \in \rpadom{\PS{\M}{\Lambda}}{m}$.
    By \ref{C2}, $h(x) \in \rpadom{\PS{\F}{\Lambda}}{m}$, so there is a $\mu 
    \in h(x) \cap \Lambda^m$ and $\rpa{h(x)}{m} = 
    \PSfiltershiftoff{\mu}{h(x)}$. 
    Hence, we need to show $h(\rpa{x}{m}) = \PSfiltershiftoff{\mu}{h(x)}$. 
    Fix $\nu \in h(\rpa{x}{m}) = (\rpa{x}{m})(e\morphdom{\rpa{x}{m}})$. 
    Then, $\nu = (\rpa{x}{m})(e, n) = x(m, mn)$ for some $(e, n) \in 
    e\morphdom{\rpa{x}{m}}$. 
    Observe $x(e, m) \in h(x) \cap \Lambda^m$ because $x$ is 
    degree-preserving, so $x(e, m) = \mu$ by the injectivity of $\d|_{h(x)}$. 
    Now 
    \[
        x(e, mn) 
        = x(e, m)x(m, mn) 
        = \mu\nu,
    \] 
    so $\mu\nu \in x(e\morphdom{x}) = h(x)$. 
    Thus, $\nu \in \PSfiltershiftoff{\mu}{h(x)}$.
    Now fix $\nu \in \PSfiltershiftoff{\mu}{h(x)}$, so $\mu\nu \in h(x) = 
    x(e\morphdom{x})$. 
    Then, $\mu\nu = x(e, l)$ for some $(e, l) \in e\morphdom{x}$. 
    Compute $l = \d(x(e, l)) = \d(\mu)\d(\nu)$, so 
    \[
        \mu\nu 
        = x(e, l) 
        = x(e, \d(\mu))x(\d(\mu), \d(\mu)\d(\nu)).
    \] 
    By the unique factorisation property, $\nu = x(\d(\mu), \d(\mu)\d(\nu)) = 
    (\rpa{x}{\d(\mu)})(e, \d(\nu)) = (\rpa{x}{m})(e, \d(\nu)) \in 
    h(\rpa{x}{m})$.
    Therefore, $h$ is a conjugacy.
\end{proof}

\begin{corollary}
    Let $(Q, P)$ be a weakly quasi-lattice ordered group, and let $\Lambda$ 
    be a finitely aligned $P$-graph.
    The action $(\PS{\M}{\Lambda}, P, T_{\PS{\M}{\Lambda}})$ is locally 
    compact and directed.
\end{corollary}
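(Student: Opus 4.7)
The plan is to deduce this immediately from what has been established. By \cref{thm:conjugacy}, the map $h \colon \PS{\M}{\Lambda} \to \PS{\F}{\Lambda}$ is a conjugacy between the semigroup actions $(\PS{\M}{\Lambda}, P, T_{\PS{\M}{\Lambda}})$ and $(\PS{\F}{\Lambda}, P, T_{\PS{\F}{\Lambda}})$. The filter-side action is known to be locally compact and directed (this is the finitely aligned case of \cite[Theorem~5.14]{arXiv_Jon25}, cited as a proposition in \cref{sec:path_groupoid}). By \cref{prop:conjugacy_preserves_properties}, both local compactness and directedness transfer across a conjugacy in the other direction as well (since $h^{-1} \colon \PS{\F}{\Lambda} \to \PS{\M}{\Lambda}$ is also a conjugacy), so the corollary follows.

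The one very minor point to check is that the symmetry is genuine: a conjugacy is manifestly invertible by a conjugacy, because the three conditions \ref{C1}, \ref{C2} and \ref{C3} are each symmetric in the two actions once one inverts the homeomorphism. Concretely, \ref{C1} is symmetric, \ref{C2} for $h^{-1}$ is just the equality $h^{-1}(\rpadom{\PS{\F}{\Lambda}}{m}) = \rpadom{\PS{\M}{\Lambda}}{m}$, which is a rearrangement of \ref{C2} for $h$, and \ref{C3} for $h^{-1}$ follows by applying $h^{-1}$ to both sides of the identity $h(\rpa{x}{m}) = \rpa{h(x)}{m}$ and renaming. So no obstacle arises here; the corollary is a direct application of \cref{thm:conjugacy} together with \cref{prop:conjugacy_preserves_properties} and the known properties of $(\PS{\F}{\Lambda}, P, T_{\PS{\F}{\Lambda}})$. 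The proof should fit in one or two sentences.
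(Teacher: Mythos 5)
Your proof is correct and matches the paper's own argument: the paper likewise deduces the corollary from \cref{thm:conjugacy}, \cref{prop:conjugacy_preserves_properties}, and the known properties of $(\PS{\F}{\Lambda}, P, T_{\PS{\F}{\Lambda}})$. Your explicit check that the inverse of a conjugacy is again a conjugacy is a point the paper leaves implicit (its proposition only transfers properties in the direction of $h$, while $h$ here goes from the morphism space to the filter space), so your remark fills a small gap rather than creating one.
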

\begin{proof}
    Recall from \cref{prop:conjugacy_preserves_properties} that conjugacies 
    preserve local compactness and directedness.
    Since the action $(\PS{\F}{\Lambda}, P, T_{\PS{\F}{\Lambda}})$ is locally 
    compact and directed, we now have that 
    $(\PS{\M}{\Lambda}, P, T_{\PS{\M}{\Lambda}})$ is locally compact and 
    directed (and hence $\PG{\M}{\Lambda}$ from \cref{def:PGmorph} is 
    well-defined).
\end{proof}

\begin{corollary}
    \label{cor:isom}
    Let $(Q, P)$ be a weakly quasi-lattice ordered group, and let $\Lambda$ 
    be a finitely aligned $P$-graph.
    The map 
    \[
        \psi_h \colon \PG{\M}{\Lambda} \to \PG{\F}{\Lambda}, \quad 
        (x, q, y) \mapsto (x(e\morphdom{x}), q, y(e\morphdom{y})),
    \]
    is an isomorphism. 
    The set $\BPS{\M}{\Lambda}$ is closed and invariant, and $\psi_h$ 
    restricts to an isomorphism from $\BPG{\M}{\Lambda} = 
    \reduction{\PG{\M}{\Lambda}}{\BPS{\M}{\Lambda}}$ to $\BPG{\F}{\Lambda} = 
    \reduction{\PG{\F}{\Lambda}}{\BPS{\F}{\Lambda}}$.
\end{corollary}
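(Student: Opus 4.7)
The plan is to obtain this corollary directly from the conjugacy $h$ of \cref{thm:conjugacy} via \cref{thm:conjugate_actions_have_isomorphic_groupoids}. Since both semigroup actions are locally compact and directed and $h$ is a conjugacy, \cref{thm:conjugate_actions_have_isomorphic_groupoids} immediately yields the topological isomorphism $\psi_h \colon \PG{\M}{\Lambda} \to \PG{\F}{\Lambda}$ given by the displayed formula; here $h(x) = x(e\morphdom{x})$ is exactly the map in the statement. It remains to identify $\BPS{\M}{\Lambda}$ with $h^{-1}(\BPS{\F}{\Lambda})$ and then to invoke the final clause of \cref{thm:conjugate_actions_have_isomorphic_groupoids}.

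The heart of the proof is the equality $h(\BPS{\M}{\Lambda}) = \BPS{\F}{\Lambda}$, which I would establish via the extendability characterisation of $\BPS{\F}{\Lambda}$ stated just before \cref{def:BPGfilter}. For the forward direction, take $x \in \BPS{\M}{\Lambda}$ and fix $\mu \in h(x)$, so $\mu = x(e, \d(\mu))$ by degree-preservation. Given any finite exhaustive $E \subseteq \Lambda$ with $\s(\mu) \in \r(E)$, the defining property of $\BPS{\M}{\Lambda}$ yields $\nu \in E$ with $x(\d(\mu), \d(\mu)\d(\nu)) = \nu$; functoriality then gives $\mu\nu = x(e, \d(\mu)) x(\d(\mu), \d(\mu)\d(\nu)) = x(e, \d(\mu)\d(\nu)) \in h(x)$, witnessing extendability of $\mu$ in $h(x)$. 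For the converse, suppose $h(x) \in \BPS{\F}{\Lambda}$ and fix $(e, m) \in \morphdom{x}$ together with a finite exhaustive $E$ with $\s(x(e, m)) \in \r(E)$. Setting $\mu \coloneqq x(e, m) \in h(x)$, the extendability of $\mu$ furnishes $\nu \in E$ with $\mu\nu \in h(x) = x(e\morphdom{x})$; degree-preservation and the unique factorisation property applied to $\mu\nu = x(e, m\d(\nu)) = x(e, m)\, x(m, m\d(\nu))$ then force $x(m, m\d(\nu)) = \nu$, as required.

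With $h(\BPS{\M}{\Lambda}) = \BPS{\F}{\Lambda}$ in hand, the remaining statements follow quickly. Closedness of $\BPS{\M}{\Lambda}$ is immediate from \ref{C1} and closedness of $\BPS{\F}{\Lambda}$. For invariance, the symmetry of \cref{def:conjugate} shows that $h^{-1}$ is itself a conjugacy (from $(\PS{\F}{\Lambda}, P, T_{\PS{\F}{\Lambda}})$ to $(\PS{\M}{\Lambda}, P, T_{\PS{\M}{\Lambda}})$), so applying the invariance-preserving statement of \cref{thm:conjugate_actions_have_isomorphic_groupoids} to $h^{-1}$ and the closed invariant set $\BPS{\F}{\Lambda}$ transports invariance to $\BPS{\M}{\Lambda}$. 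The ``moreover'' clause of \cref{thm:conjugate_actions_have_isomorphic_groupoids} then yields the desired restricted isomorphism $\reduction{\PG{\M}{\Lambda}}{\BPS{\M}{\Lambda}} \to \reduction{\PG{\F}{\Lambda}}{h(\BPS{\M}{\Lambda})} = \reduction{\PG{\F}{\Lambda}}{\BPS{\F}{\Lambda}}$.

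The main obstacle is the converse step of the second paragraph: extracting $\nu \in E$ (rather than an arbitrary element of $\Lambda$) from the extendability hypothesis. Under the standard convention for categories of paths, whereby extendability already furnishes $\nu \in E$ itself, the step is immediate as written; if instead the extendability characterisation only supplies some $\nu' \in \Lambda$ with $\mu\nu' \in h(x)$, then a supplementary argument is needed, using that $E$ is exhaustive at $\s(\mu) = \r(\nu')$ together with finite alignment of $\Lambda$ to locate a common extension of $\mu\nu'$ and some $\mu\kappa$ with $\kappa \in E$, and then exploiting hereditariness of $h(x)$ on the resulting factorisation to conclude $\mu\kappa \in h(x)$.
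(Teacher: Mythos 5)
Your proof is correct and follows essentially the same route as the paper's: deduce the isomorphism from \cref{thm:conjugate_actions_have_isomorphic_groupoids} and \cref{thm:conjugacy}, then establish $h(\BPS{\M}{\Lambda}) = \BPS{\F}{\Lambda}$ via the extendability characterisation exactly as you do (the paper is slightly less explicit than you about why $\BPS{\M}{\Lambda}$ is closed and invariant, which your appeal to $h^{-1}$ being a conjugacy handles cleanly). The worry in your final paragraph is moot: although the definition of extendability as printed says ``there is a $\nu \in \Lambda$'' (a slip, since that reading is vacuously satisfied by $\nu = \s(\mu)$), the paper's own proof uses the intended reading $\nu \in E$, which is exactly what your main argument assumes.
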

\begin{proof}
    The isomorphism $\psi_h$ is a consequence of 
    \cref{thm:conjugate_actions_have_isomorphic_groupoids} and 
    \cref{thm:conjugacy}. 
    For the rest of the claim, it remains to show $h^{-1}(\BPS{\F}{\Lambda}) 
    = \BPS{\M}{\Lambda}$ (because of the last part of 
    \cref{thm:conjugate_actions_have_isomorphic_groupoids}).
    Equivalently, we want to show $h(\BPS{\M}{\Lambda}) = \BPS{\F}{\Lambda}$.
    Fix $x \in \BPS{\M}{\lambda}$.
    Let $\mu \in h(x)$. 
    We need to show $\mu$ is extendable in $h(x)$ (defined in 
    \cref{def:extendable}).
    Suppose $E \subseteq \Lambda$ is finite, exhaustive and $\s(\mu) \in 
    \r(E)$.
    We need to find $\nu \in E$ such that $\mu\nu \in h(x)$.
    By definition of $h$, $h(x) = x(e\morphdom{x})$, so $\mu = x(e, m)$ for 
    some $(e, m) \in e\morphdom{x}$.
    Thus, $\s(x(e, m)) \in \r(E)$, and so there is a $\nu \in E$ such that 
    $x(m, m\d(\nu)) = \nu$ by definition of $\BPS{\M}{\Lambda}$.
    Then, 
    \[
        x(e, m\d(\nu)) 
        = x(e, m)x(m, m\d(\nu)) 
        = \mu\nu 
        \in h(x),
    \] 
    as required. 
    That is, $h(x) \in \BPS{\F}{\Lambda}$.
    
    For the other inclusion, fix $y \in \BPS{\F}{\Lambda}$.
    Since $h$ is surjective, $y = h(x)$ for some $x \in \PS{\M}{\Lambda}$.
    We want $x \in \BPS{\M}{\Lambda}$.
    Suppose $(e, m) \in e\morphdom{x}$ and $E \subseteq \Lambda$ is finite, 
    exhaustive and $\s(x(e, m)) \in \r(E)$.
    Notice $x(e, m) \in h(x)$, so by definition of $\BPS{\F}{\Lambda}$ there 
    is some $\nu \in E$ such that $x(e, m)\nu \in h(x) = x(e\morphdom{x})$.
    Thus, $x(e, m)\nu = x(e, l) = x(e, m)x(m, l)$ for some $(e, l) \in 
    e\morphdom{x}$, and so $\nu = x(m, l)$.
    Because $x$ is degree-preserving, $x(e, m)\nu = x(e, l)$ implies 
    $m\d(\nu) = l$, so $\nu = x(m, m\d(\nu))$.
    Therefore, $x \in \BPS{\M}{\Lambda}$, as needed. 
    That is, $h(\BPS{\M}{\Lambda}) = \BPS{\F}{\Lambda}$.
\end{proof}

We have now established our main results. In the rest of the paper, we 
reconcile the path spaces and groupoids of \cref{cor:isom} with other 
groupoids in the literature at various levels of generality (see 
\cref{rem:gpd_isoms_for_Pgraphs} for path groupoids of finitely aligned 
$P$-graphs, \cref{rem:gpd_isoms_for_Pgraphs_row_finite} for path groupoids of 
row-finite $P$-graphs, \cref{rem:space_homeos_for_kgraphs} for path 
\textit{spaces} of finitely aligned $k$-graphs, and 
\cref{rem:gpd_isoms_for_kgraphs} for path groupoids of finitely aligned 
$k$-graphs).

\subsection{
    Reconciling with other groupoids of \texorpdfstring{$P$-graphs}{P-graphs}
}

Recall from \cref{sec:Pgraphs} that any $P$-graph $\Lambda$ is a category of 
paths, and so we may also consider Spielberg's groupoids from \cite{Spi20}. 
By \cite[Proposition 5.11]{Spi20}, the two groupoids from 
\cite[Definition 5.5(iii)]{Spi20} are isomorphic. 
We denote this groupoid by $\PG{\ofSpielberg}{\Lambda}$.
Spielberg defines a subspace of its unit space in 
\cite[Definition 10.2]{Spi20}, and we denote the reduction of 
$\PG{\ofSpielberg}{\Lambda}$ to this subspace by 
$\BPG{\ofSpielberg}{\Lambda}$.
Combining \cref{cor:isom} with 
\cite[Corollary 4.14 and Proposition 5.2]{OP20} and 
\cite[Theorem 6.6 and Corollary 6.8]{arXiv_Jon25}, we have the following 
remark.

\begin{remark}
    \label{rem:gpd_isoms_for_Pgraphs}
    For any finitely aligned $P$-graph $\Lambda$, where $(Q, P)$ is a weakly 
    quasi-lattice ordered group,
    \begin{align*}
        &\PG{\ofSpielberg}{\Lambda} 
        \cong \PG{\M}{\Lambda} 
        \cong \PG{\F}{\Lambda} \text{ and } \\
        &\BPG{\ofSpielberg}{\Lambda}
        \cong \BPG{\M}{\Lambda}
        \cong \BPG{\F}{\Lambda} 
        \cong \tightgpd(S_{\Lambda, \ofOP}),
    \end{align*}
    where $S_{\Lambda, \ofOP}$ is the inverse semigroup from 
    \cite[Definition 2.2]{OP20} and $\tightgpd(S_{\Lambda, \ofOP})$ is Exel's 
    tight groupoid from \cite{Exe08}.
\end{remark}

\begin{remark}
    By \cite[Lemma 4.4]{OP20}, the space $\hat{E}_*(S_{\Lambda, \ofOP})$ of 
    characters of the idempotent semilattice $E(S_{\Lambda, \ofOP})$ that 
    satisfy condition $(*)$ in the sense of \cite[Definition 3.15]{OP20} is 
    invariant under the natural action of $S_{\Lambda, \ofOP}$. 
    Moreover, by \cite[Corollary 3.16]{OP20}, $\hat{E}_*(S_{\Lambda, \ofOP})$ 
    is homeomorphic to $\PS{\F}{\Lambda}$. 
    It would be worth checking that the groupoid of germ of the action of 
    $S_{\Lambda, \ofOP}$ on $\hat{E}_*(S_{\Lambda, \ofOP})$ coincides with 
    the path groupoids of \cref{rem:gpd_isoms_for_Pgraphs}.
\end{remark}

\begin{remark}
    \label{rem:gpd_isoms_for_Pgraphs_row_finite}
    In the setting of \cref{rem:gpd_isoms_for_Pgraphs}, if $\Lambda$ is 
    row-finite (i.e. for any $(v, p) \in \unitspace{\Lambda} \times P$, the 
    set $\setof{\lambda \in \Lambda}{\r(\lambda) = v \text{ and } \d(\lambda) 
    = p}$ is finite), then we also have $\PG{\F}{\Lambda}$ is isomorphic to 
    the Toeplitz groupoid from \cite[Definition 6.11]{RW17} and 
    $\BPG{\F}{\Lambda}$ is isomorphic to the Cuntz--Krieger groupoid from 
    \cite[Definition 6.18]{RW17} as per \cite[Remark 6.7]{arXiv_Jon25}.
\end{remark}

\section{\texorpdfstring{$k$-graphs}{k-graphs}}
    \label{sec:k}

Throughout this section, let $\Lambda$ be a finitely aligned $k$-graph.
We write $m(i)$ for the $i$th entry of the $k$-tuple $m \in \N^k$.
We show the $P$-path prototypes $\Omega_{P, (m_n)}$ defined in 
\cref{prop:increasing_sequences_in_P_yield_nested_P_graphs} coincide with the 
rank-$k$ path prototypes $\Omega_{k, m}$ of \cite{Yee07}.
Given $\Omega_{k, m}$ for some $m \in (\N \cup \{\infty\})^k$, we have that 
$\Omega_{k, m} = \Omega_{\N^k, (m_n)}$, where 
\[
    (m_n(i)) \coloneqq 
    \begin{cases}
        (m(i), m(i), \dots) & \text{ if } m(i) \in \N, \\
        (0, 1, 2, \dots) & \text{ if } m(i) = \infty,
    \end{cases}
\]
for each $1 \leq i \leq k$.
Also, given $\Omega_{\N^k, (m_n)}$ for some $\leq$-increasing $(m_n) 
\subseteq \N^k$, we have that $\Omega_{k, m} = \Omega_{\N^k, (m_n)}$, where 
\[
    m(i) \coloneqq 
    \begin{cases}
        \max\setof{m_n(i)}{n \in \N} & \text{ if $(m_n(i))$ is bounded,} \\
        \infty & \text{ if $(m_n(i))$ is unbounded},
    \end{cases}
\]
for each $1 \leq i \leq k$.
Therefore,
\(
    \setof{\Omega_{k, m}}{m \in (\N \cup \{\infty\})^k}
    = \setof{
        \Omega_{\N^k, (m_n)}
    }{
        (m_n) \subseteq \N^k \text{ is $\leq$-increasing}
    }.
\)
Hence, $\PS{\M}{\Lambda}$ coincides with $\PS{\ofYeend}{\Lambda}$ from 
\cite[Definition 3.1]{Yee07}. 
Since \cite[Examples 4.10(ii)]{Yee07} reconciles the path spaces and 
groupoids in \cite{Yee07} with those in \cite{FMY05}, and by comparing our 
definitions of exhaustive sets and $\BPS{\M}{\Lambda}$ with those of 
\cite{FMY05}, we make the following remark (which adds to the existing 
homeomorphisms of path and boundary-path spaces from 
\cref{rem:gpd_isoms_for_Pgraphs} for $P$-graphs).

\begin{remark}
    \label{rem:space_homeos_for_kgraphs}
    For any finitely aligned (discrete) $k$-graph, 
    \[
        \PS{\ofFMY}{\Lambda}
        \cong \PS{\ofYeend}{\Lambda}
        \cong \PS{\ofSpielberg}{\Lambda}
        \cong \PS{\M}{\Lambda} 
        \cong \PS{\F}{\Lambda}
        \cong \hat{E}_*(S_{\Lambda, \ofOP}),
    \]
    where $\hat{E}_*(S_{\Lambda, \ofOP})$ is the space of characters of the 
    idempotent semilattice $E(S_{\Lambda, \ofOP})$ that satisfy condition 
    $(*)$ as per \cite[Definition 3.15 and Corollary 3.16]{OP20},
    and
    \[
        \BPS{\ofFMY}{\Lambda}
        \cong \BPS{\ofYeend}{\Lambda}
        \cong \BPS{\ofSpielberg}{\Lambda}
        \cong \BPS{\M}{\Lambda} 
        \cong \BPS{\F}{\Lambda}
        \cong \hat{E}_\mathrm{tight}(S_{\Lambda, \ofOP}),
    \]
    where $\hat{E}_\mathrm{tight}(S_{\Lambda, \ofOP}) \cong 
    \unitspace{\tightgpd(S_{\Lambda, \ofOP})}$ is the space of characters of 
    $E(S_{\Lambda, \ofOP})$ that are tight as per 
    \cite[Definition 12.8]{Exe08}.
\end{remark}

Now, we check that our action $(\PS{\M}{\Lambda}, P, T_{\PS{\M}{\Lambda}})$
coincides with the action underlying the path groupoid 
$\PG{\ofYeend}{\Lambda}$ so that $\PG{\M}{\Lambda} = \PG{\ofYeend}{\Lambda}$.
Each $x \in \PS{\M}{\Lambda}$ is of the form $x \colon \Omega_{k, m} \to 
\Lambda$ for some $m \in (\N \cup \{\infty\})^k$.
Then, $(x, p) \in \PS{\M}{\Lambda} * \N^k$ if and only if $(e, p) \in 
\Omega_{k, m}$ if and only if $p \leq m$, so the domain of the action of each 
$p \in \N^k$ in our construction equals the domain of the action of $p$ 
underlying $\PG{\ofYeend}{\Lambda}$.
In the above setting, following \cite[Lemma 3.3]{Yee07}, $\sigma^p x$ 
denotes the unique element of $\PS{\M}{\Lambda}$ with $\morphdom{\sigma^p x} 
= \Omega_{k, m - p}$ and 
\(
    (\sigma^p x)(0, q) 
    = x(p, p + q),
\)
for each $q \leq m - p$. 
The graph morphism $\rpa{x}{m}$ too has these properties by construction, so 
$\rpa{x}{m} = \sigma^m x$.
Thus, the actions are the same, and so $\PG{\M}{\Lambda} 
= \PG{\ofYeend}{\Lambda}$ as sets.
Moreover, the groupoid operations are the same, so they are equal as 
groupoids too.

It remains to show the topology $\tau_\ofYeend$ on $\PG{\ofYeend}{\Lambda}$ 
equals the topology $\tau$ on $\PG{\M}{\Lambda}$ so that $\PG{\M}{\Lambda} 
= \PG{\ofYeend}{\Lambda}$ as topological groupoids. 
This is well-known, but we are not aware of a reference where the details 
have been given.
We start by identifying a basis for each topology.
By \cref{cor:isom} and \cite[Lemma 6.3]{arXiv_Jon25}, the collection of 
\[
    \Fellgpd{\PG{\M}{\Lambda}}{\kappa}{K}{\lambda}{L} \coloneqq 
    \sdpgcylinder{\PS{\M}{\Lambda}}{
        \Fell{\PS{\M}{\Lambda}}{\kappa}{K}
    }{
        \d(\kappa)
    }{
        \d(\lambda)
    }{
        \Fell{\PS{\M}{\Lambda}}{\lambda}{L}
    },
\]
where $\kappa, \lambda \in \Lambda$ and $K \subseteq \kappa\Lambda, L 
\subseteq \lambda\Lambda$ are finite, is a basis for $\tau$.
Given any sets $A, B \subseteq \Lambda$, we write 
\[
    A *_{\s} B \coloneqq 
    \setof{
        (\lambda, \mu) \in A \times B
    }{
        \s(\lambda) = \s(\mu)
    }.
\]
For each $m \in \Z^k$ and $F \subseteq \Lambda *_{\s} \Lambda$, define
\[
    Z_\ofYeend(F, m) \coloneqq 
    \setof{
        (\lambda x, \d(\lambda) - \d(\mu), \mu x) \in \PG{\M}{\Lambda}
    }{
        (\lambda, \mu) \in F, \d(\lambda) - \d(\mu) = m
    },
\]
where $\lambda x$ is defined as in \cite[Lemma 3.3]{Yee07}. 
We can assume without loss of generality that each $(\lambda, \mu) \in F$ 
satisfies $\d(\lambda) - \d(\mu) = m$.
By \cite[Proposition 3.6]{Yee07}, the collection of sets of the form 
\(
    Z_\ofYeend(A *_{\s} B, m) \cap \comp{Z_\ofYeend(F, m)}
\)
for $m \in \Z^k$ and $A, B \subseteq \Lambda$ and finite $F \subseteq 
\Lambda *_{\s} \Lambda$ is a basis for $\tau_\ofYeend$.

We show $\tau_\ofYeend \subseteq \tau$.
For any $m \in \N^k$ and $A, B \subseteq \Lambda$,
\[
    Z_\ofYeend(A *_{\s} B, m) 
    = \bigcup_{(\lambda, \mu) \in A *_{\s} B}
    \Fellgpd{\PG{\M}{\Lambda}}{\lambda}{\emptyset}{\mu}{\emptyset}
    \in \tau.
\]
Now fix $m \in \Z^k$ and finite $F \subseteq \Lambda *_{\s} \Lambda$.
To show $\comp{Z_\ofYeend(F, m)} \in \tau$, we show $Z_\ofYeend(F, m)$ is 
closed with respect to $\tau$. 
Suppose $(g_n) \subseteq Z_\ofYeend(F, m)$ and $g_n \to g \in 
\PG{\M}{\Lambda}$ with respect to $\tau$. 
Because $(g_n) \subseteq Z_\ofYeend(F, m)$, we can write $g_n = 
(\lambda_n x_n, m, \mu_n x_n)$, where $(\lambda_n, \mu_n) \in F$, for all 
$n$.
Because $F$ is finite, by the pigeon-hole principle, there is a subsequence 
$(g_{n_k})$ and some $(\lambda, \mu) \in F$ such that $g_{n_k} = 
(\lambda x_{n_k}, m, \mu x_{n_k})$, for all $k$.
Moreover, $g_{n_k} \to g$. 
Thus, we can assume without loss of generality that $g_n = 
(\lambda x_n, m, \mu x_n)$, for all $n$.
Say $g = (w', m', z')$.
By \cite[Proposition 5.12(b)]{RW17}, the map $(x, n, z) \mapsto n$ from 
$\PG{\M}{\Lambda}$ to $\Z^k$ is continuous with respect to $\tau$ and the 
discrete topology on $\Z^k$, so $(m, m, \dots)$ converges to $m'$, which 
means $m = m'$.
Since $\PG{\M}{\Lambda}$ is a topological groupoid, $\r(g_n) \to \r(g)$ and 
$\s(g_n) \to \s(g)$ with respect to $\tau$.
The unit space $\unitspace{\PG{\M}{\Lambda}}$ is homeomorphic to 
$\PS{\M}{\Lambda}$ so that $\lambda x_n \to w'$ and $\mu x_n \to z'$.
Notice $(\lambda x_n) \subseteq \Fellin{\PS{\M}{\Lambda}}{\lambda}$ and 
$(\mu x_n) \subseteq  \Fellin{\PS{\M}{\Lambda}}{\mu}$, which are closed in 
$\PS{\M}{\Lambda}$. 
Hence, $w' = \lambda w$ and $z' = \mu z$ for some $w, z \in 
\PS{\M}{\Lambda}$.
That is, $\lambda x_n \to \lambda w$ and $\mu x_n \to \mu z$.
Both $\sigma^\lambda$ and $\sigma^\mu$ are continuous, and so $x_n \to w, z$, 
which implies $x \coloneqq w = z$. 
Thus, $g = (\lambda x, m, \mu x) \in Z_\ofYeend(F, m)$, and so 
$Z_\ofYeend(F, m)$ is closed, as required.

Now, we show $\tau \subseteq \tau_\ofYeend$.
We show each $B \coloneqq \Fellgpd{\PG{\M}{\Lambda}}{\kappa}{K}{\lambda}{L}
\in \tau_\ofYeend$.
If $(\kappa, \lambda) \notin \Lambda *_{\s} \Lambda$, then $B = \emptyset \in 
\tau_\ofYeend$.
Suppose $(\kappa, \lambda) \in \Lambda *_{\s} \Lambda$.
Let $F \coloneqq \setof{
    (\kappa\zeta, \lambda\zeta) \in \Lambda *_{\s} \Lambda
}{
    \kappa\zeta \in K \text{ or } \lambda\zeta \in L
}$.
We show 
\[
    \Fellgpd{\PG{\M}{\Lambda}}{\kappa}{K}{\lambda}{L}
    = Z_\ofYeend(\{\kappa\} *_{\s} \{\lambda\}, \d(\kappa) - \d(\lambda)) 
    \cap \comp{Z_\ofYeend(F, \d(\kappa) - \d(\lambda))}.
\]
Fix $g \coloneqq (\kappa x, \d(\kappa) - \d(\lambda), \lambda x) \in 
\Fellgpd{\PG{\M}{\Lambda}}{\kappa}{K}{\lambda}{L}$.
Then, $g \in 
Z_\ofYeend(\{\kappa\} *_{\s} \{\lambda\}, \d(\kappa) - \d(\lambda))$.
Suppose for a contradiction that $g \in 
Z_\ofYeend(F, \d(\kappa) - \d(\lambda))$.
Then, $g = (\kappa\zeta w, \d(\kappa) - \d(\lambda), \lambda\zeta w)$
for some $(\kappa\zeta, \lambda\zeta) \in F$.
By definition of $F$, $\kappa\zeta \in K$ or $\lambda\zeta \in L$.
In either case, there is a contradiction with $g \in 
\Fellgpd{\PG{\M}{\Lambda}}{\kappa}{K}{\lambda}{L}$.
For the other inclusion, fix $g \in 
Z_\ofYeend(\{\kappa\} *_{\s} \{\lambda\}, \d(\kappa) - \d(\lambda)) \cap 
\comp{Z_\ofYeend(F, \d(\kappa) - \d(\lambda))}$, so $g = 
(\kappa x, \d(\kappa) - \d(\lambda), \lambda x)$ for some $x \in 
\PS{\M}{\Lambda}$.
Then, $g \in 
\Fellgpd{\PG{\M}{\Lambda}}{\kappa}{\emptyset}{\lambda}{\emptyset}$.
Suppose for a contradiction that $\kappa x \notin 
\Fell{\PS{\M}{\Lambda}}{\kappa}{K}$ or $\lambda x \notin 
\Fell{\PS{\M}{\Lambda}}{\lambda}{L}$.
In the former case, we have $\kappa x = \kappa\zeta w$ for some $\kappa\zeta 
\in K$, in which case $(\kappa\zeta, \lambda\zeta) \in F$, and $x = \zeta w$.
Thus, $g = (\kappa\zeta w, \d(\kappa) - \d(\lambda), \lambda\zeta w)$, so $g 
\in Z_\ofYeend(F, \d(\kappa) - \d(\lambda))$, contradicting $g \in 
\comp{Z_\ofYeend(F, \d(\kappa) - \d(\lambda))}$.
In the latter case, we have a similar contradiction, and so $g \in 
\Fellgpd{\PG{\M}{\Lambda}}{\kappa}{K}{\lambda}{L}$.

Therefore, $\PG{\M}{\Lambda} = \PG{\ofYeend}{\Lambda}$ as topological 
groupoids. 
Since $\BPS{\M}{\Lambda} = \BPS{\ofYeend}{\Lambda}$, we also have that
\(
    \BPG{\M}{\Lambda}
    = \reduction{\PG{\M}{\Lambda}}{\BPS{\M}{\Lambda}}
    = \reduction{\PG{\ofYeend}{\Lambda}}{\BPS{\ofYeend}{\Lambda}}
    = \BPG{\ofYeend}{\Lambda}.
\)

\begin{remark}
    \label{rem:gpd_isoms_for_kgraphs}
    Recall the list of isomorphic path and boundary-path groupoids associated 
    to $P$-graphs in \cref{rem:gpd_isoms_for_Pgraphs}.
    By \cite[Examples 4.10(ii)]{Yee07} and the above discussion, we now have 
    the following.
    For any finitely aligned $k$-graph $\Lambda$,
    \begin{align*}
        &\PG{\ofFMY}{\Lambda}
        \cong \PG{\ofYeend}{\Lambda}
        \cong \PG{\ofSpielberg}{\Lambda} 
        \cong \PG{\M}{\Lambda} 
        \cong \PG{\F}{\Lambda} \text{ and } \\
        &\BPG{\ofFMY}{\Lambda}
        \cong \BPG{\ofYeend}{\Lambda}
        \cong \BPG{\ofSpielberg}{\Lambda}
        \cong \BPG{\M}{\Lambda}
        \cong \BPG{\F}{\Lambda} 
        \cong \tightgpd(S_{\Lambda, \ofOP}),
    \end{align*}
    where $S_{\Lambda, \ofOP}$ is the inverse semigroup from 
    \cite[Definition 2.2]{OP20} and $\tightgpd(S_{\Lambda, \ofOP})$ is Exel's 
    tight groupoid from \cite[Theorem 13.3]{Exe08}.
\end{remark}

\begin{remark}
    By \cite[Remark 5.9]{FMY05}, for any finitely aligned $k$-graph 
    $\Lambda$, the C*-algebras of the path and boundary-path groupoids of 
    \cref{rem:gpd_isoms_for_kgraphs} are isomorphic to the Toeplitz and 
    Cuntz--Krieger algebras of $\Lambda$ as in \cite[Remark 3.9]{FMY05}.
\end{remark}

\begin{remark}
    \label{rem:Exel}
    \cref{rem:gpd_isoms_for_kgraphs} implies $\BPG{\ofFMY}{\Lambda}$ is 
    isomorphic to $\tightgpd(S_{\Lambda, \ofOP})$. 
    In this way, we give a partial answer to Exel's conjecture on 
    \cite[Page 195]{Exe08}, as discussed in the introduction. 
    The answer is \textit{partial} in the sense that the inverse semigroup 
    $S_{\Lambda, \ofFMY}$ from \cite[Definition 4.1]{FMY05} is not isomorphic 
    to $S_{\Lambda, \ofOP}$. 
    Roughly speaking, $S_{\Lambda, \ofFMY}$ contains joins unlike 
    $S_{\Lambda, \ofOP}$, but some investigation is needed to relate the two 
    inverse semigroups precisely and determine whether they have the same 
    tight groupoid. 
    Following \cite{arXiv_Exe25}, $S_{\Lambda, \ofFMY}$ and 
    $S_{\Lambda, \ofOP}$ have the same tight groupoid if and only if they are 
    \textit{consonant}.
\end{remark}

\appendix 
\section{Notation for path spaces and groupoids}
    \label{sec:notation}

The notation of path spaces and their groupoids is inconsistent in the 
literature. 
The convention we have chosen for this paper is as follows:

\begin{itemize}
    \item Path spaces
    \begin{itemize}
        \item $\PS{\F}{\Lambda}$ \dotfill \cref{def:PSfilter}
        \item $\PS{\M}{\Lambda}$ \dotfill \cref{def:PSmorph}
        \item $\PS{\ofFMY}{\Lambda}$ \dotfill $X_\Lambda$ of 
        \cite[Definition 5.1]{FMY05}
        \item $\PS{\ofYeend}{\Lambda}$ \dotfill $X_\Lambda$ of 
        \cite[Definition 3.1]{Yee07}
        \item $\PS{\ofSpielberg}{\Lambda}$ \dotfill $X(\Lambda)$ of 
        \cite[Page 1586]{Spi20}
    \end{itemize}
    \item Path groupoids
    \begin{itemize}
        \item $\PG{\F}{\Lambda}$ \dotfill \cref{def:PGfilter}
        \item $\PG{\M}{\Lambda}$ \dotfill \cref{def:PGmorph}
        \item $\PG{\ofFMY}{\Lambda}$ \dotfill $\mathcal{G}_\Lambda$ of 
        \cite[\S 6]{FMY05}
        \item $\PG{\ofYeend}{\Lambda}$ \dotfill $G_\Lambda$ of 
        \cite[Definition 3.4]{Yee07}
        \item $\PG{\ofSpielberg}{\Lambda}$ \dotfill $G(\Lambda)$ of 
        \cite[Definition 5.5]{Spi20}
    \end{itemize}
    \item Boundary-path spaces
    \begin{itemize}
        \item $\BPS{\F}{\Lambda}$ \dotfill \cref{def:BPSfilter}
        \item $\BPS{\M}{\Lambda}$ \dotfill \cref{def:BPSmorph}
        \item $\BPS{\ofFMY}{\Lambda}$ \dotfill $\partial\Lambda$ of 
        \cite[Definition 5.10]{FMY05}
        \item $\BPS{\ofYeend}{\Lambda}$ \dotfill $\partial\Lambda$ of 
        \cite[Definition 4.2]{Yee07}
        \item $\BPS{\ofSpielberg}{\Lambda}$ \dotfill $\partial\Lambda$ of 
        \cite[Definition 10.2]{Spi20}
    \end{itemize}
    \item Boundary-path groupoids
    \begin{itemize}
        \item $\BPG{\F}{\Lambda}$ \dotfill \cref{def:BPGfilter}
        \item $\BPG{\M}{\Lambda}$ \dotfill \cref{def:BPGmorph}
        \item $\BPG{\ofFMY}{\Lambda}$ \dotfill 
        $\mathcal{G}_\Lambda|_{\partial\Lambda}$ of \cite[Page 181]{FMY05}
        \item $\BPG{\ofYeend}{\Lambda}$ \dotfill $\mathcal{G}_\Lambda$ of 
        \cite[Definition 4.8]{Yee07}
        \item $\BPG{\ofSpielberg}{\Lambda}$ \dotfill 
        $G(\Lambda)|_{\partial\Lambda}$ of \cite[Page 1602]{Spi20}
    \end{itemize}
\end{itemize}

\section*{Acknowledgements}

This research was supported by Marsden grant 21-VUW-156 from the Royal 
Society of New Zealand and partially supported by a grant from the Simons 
Foundation. 
The authors would like to thank the Isaac Newton Institute for Mathematical 
Sciences for the support and hospitality during the programme `Topological 
groupoids and their C*-algebras' when work on this paper was undertaken. 
This work was supported by EPSRC Grant Number EP/V521929/1. 
The second author was supported by grant P1-0288 from the Slovenian Research 
and Innovation Agency. 
The second author would like to thank Astrid an Huef for her supervision 
while undertaking this research as a PhD student.

\printbibliography

\end{document}

\typeout{get arXiv to do 4 passes: Label(s) may have changed. Rerun}